\renewcommand{\theequation}{\arabic{section}.\arabic{equation}}
\newcommand{\R}{\mathbb{R}}
\newtheorem{theorem}{Theorem}[section]
\newtheorem{lemma}{Lemma}[section]
\newtheorem{remark}{Remark}[section]
\newtheorem{proposition}{Proposition}[section]
\begin{document}

\begin{center}

\Large{Least squares estimator for non-ergodic
Ornstein-Uhlenbeck processes driven by  Gaussian processes }

\bigskip

\normalsize{Mohamed El Machkouri\footnote{ Laboratoire de
Math\'ematiques Rapha\"{e}l Salem UMR CNRS 6085, Universit\'e de
Rouen, France. Email: mohamed.elmachkouri@univ-rouen.fr}, Khalifa
Es-Sebaiy\footnote{National School of Applied Sciences-Marrakesh,
Cadi Ayyad University Av. Abdelkrim Khattabi, 40000,
Gu\'eliz-Marrakech, Morocco. Email: k.essebaiy@uca.ma} and Youssef
Ouknine\footnote{National School of Applied Sciences-Marrakesh, Cadi
Ayyad University Av. Abdelkrim Khattabi, 40000, Gu\'eliz-Marrakech,
Morocco. Email: ouknine@uca.ma}}
\end{center}

\

{\small \noindent {\bf Abstract}: The statistical analysis for
equations driven by fractional Gaussian process (fGp)  is relatively
recent. The development of stochastic calculus with respect to the
fGp allowed to study such models. In the present paper we consider
the drift parameter estimation problem for the non-ergodic
Ornstein-Uhlenbeck process defined as $dX_t=\theta X_tdt+dG_t,\
t\geq0$  with an unknown parameter  $\theta>0$, where $G$ is  a
Gaussian process. We provide sufficient  conditions, based on the
properties of $G$, ensuring
 the strong consistency and the asymptotic distribution of our estimator $\widetilde{\theta}_t$ of
 $\theta$ based on the
observation $\{X_s,\ s\in[0,t]\}$ as $t\rightarrow\infty$.
 Our approach offers an elementary, unifying proof of \cite{BEO},
  and it allows to extend the result of \cite{BEO} to the case  when $G$ is a fractional
   Brownian motion with Hurst parameter $H\in(0,1)$. We also discuss the cases of subfractional
   Brownian motion and bifractional Brownian motion.\\

\noindent {\bf Key words}: Parameter
estimation, Non-ergodic Gaussian Ornstein-Uhlenbeck process.\\

\section{Introduction}
While the statistical inference of Ito's type diffusions has a long
history, the statistical analysis for equations driven by fractional
Gaussian process is relatively recent. The development of stochastic
calculus with respect to the fGp has allowed to study such models.
We will recall  several approaches to estimate the parameters in
fractional models but we mention that the list below is not
exhaustive:
\begin{description}
\item{- } The MLE approach in \cite{KL}, \cite{TV}: In general the techniques used to construct
maximum likelihood estimators (MLE)  for the drift parameter
 are based on Girsanov's transforms for fBm and depend on  the properties of the deterministic
fractional operators (determined by the Hurst parameter) related to
the fBm. In this case, the MLE is not easily computable.
\item{- } A least squares approach has been proposed in \cite{HN}: The study of the asymptotic properties of
 the estimator is based on certain criteria formulated in terms of the Malliavin calculus. In the ergodic case,
  the statistical inference for several fractional Ornstein-Uhlenbeck (fOU) models has been recently developed
   in the papers \cite{HN}, \cite{AM}, \cite{AV}, \cite{HS}, \cite{EEV}, \cite{CE}. The case of non-ergodic fOU process of the first kind and of the second kind  can be found in \cite{BEO} and  \cite{EET} respectively.
\item{- } Method of moments:  A new idea has been provided in \cite{EV}, to develop
the statistical inference for stochastic differential equations related to stationary Gaussian processes
 by proposing a suitable criteria. This approach is based on the Malliavin calculus, and it makes in principle
 the estimators easier to be simulated. Moreover, as an application, the models discussed
 in \cite{HN}, \cite{AM}, \cite{AV}, \cite{EEV} have been studied in \cite{EV} by using this approach.
\end{description}
In this paper, we consider the non-ergodic  Ornstein-Uhlenbeck
process $X=\left\{X_t, t\geq0\right\}$ given by the following linear
stochastic differential equation
\begin{eqnarray}\label{OU}X_0=0;\quad  dX_t=\theta X_tdt+dG_t,\quad t\geq0,
\end{eqnarray}where  $G$ is a Gaussian process and  $\theta>0$   is an unknown parameter. \\
A  problem here is to estimate the parameter $\theta$ when one
observes the whole trajectory of $X$. In the case when the process
$X$  has H\"older continuous paths of order $\delta\in(\frac12,1]$
we can consider the following least squares estimator (LSE)
\begin{eqnarray}\label{(LSE)} \widehat{\theta}_t =\frac{\int_0^tX_sdX_s}{\int_0^tX_s^2ds},\quad t\geq0,
\end{eqnarray}
as estimator of  $\theta$, where the integral with respect to $X$ is
a Young integral (see Appendix). The estimator  $\widehat{\theta}_t$
is obtained by the least squares technique, that is, $
\widehat{\theta}_t$ (formally) minimizes
\[\theta \longmapsto \int_0^t\left|\dot{{X}}_s-\theta X_s\right|^2ds.\]
Moreover, using  the formula (\ref{Integration by parts}) we can
rewrite  $\widehat{\theta}_t$  as follows,
 \begin{eqnarray}\label{LES2} \widehat{\theta}_t  =\frac{X^2_t}{2\int_0^tX_s^2ds},\quad t\geq0.
\end{eqnarray}
Motivated by   (\ref{LES2}) we propose to use, in the general case,
the right hand of (\ref{LES2}) as a statistic to estimate the drift
coefficient $\theta$ of the equation (\ref{OU}). More precisely, we
define
 \begin{eqnarray}\label{estimator} \widetilde{\theta}_t =\frac{X^2_t}{2\int_0^tX_s^2ds},\quad t\geq0.
\end{eqnarray}
This estimator $\widetilde{\theta}_t$ may exist even if  $X$ does
not have H\"older continuous paths of order $\delta\in(\frac12,1]$.

 We shall provide sufficient   conditions, based on the properties of $G$, under which   the  estimator $\widetilde{\theta}_t$ is consistent   (see   Theorem \ref{consistency}), and the limit distribution of $\widetilde{\theta}_t$ is a standard Cauchy distribution (see Theorem
\ref{convergence distribution}). \\

\noindent {\bf Examples of the Gaussian process G.}

  \emph{\underline{Fractional Brownian motion}}:\\
Suppose that the process $G$  given in (\ref{OU})  is a fractional
Brownian motion with Hurst parameter $H\in(0,1)$. By assuming that
$H>\frac12$,  \cite{BEO}  studied  the    LSE  $\widehat{\theta}_t$
which coincides, in this case, with $\widetilde{\theta}_t$ by Remark
\ref{remark1}. In this paper, we extend the  result of \cite{BEO} to
the case $H\in(0,1)$. Moreover, we offer an elementary  proof (see
Section 3.1).

  \emph{\underline{Sub-fractional Brownian motion}}:\\
Assume that the process $G$  given in (\ref{OU})  is a subfractional
Brownian motion with  parameter $H\in(0,1)$. For $H>\frac12$,  using
an idea of \cite{BEO}, \cite{mendy}  studied the    LSE
$\widehat{\theta}_t$ which also coincides with
$\widetilde{\theta}_t$. But the proof of Lemma 4.3 in \cite{mendy}
relies on a possibly awed technique  because the passage from line
-7 to -6 on page 671  does not allow to obtain the convergence of
$E\left[\left(e^{-\theta t}\int_0^te^{\theta
s}dS^H_s\right)^2\right]$ as $t\rightarrow\infty$. In the present
paper, we give a solution of this problem and we extend the result
to   $H\in(0,1)$ (see Section 3.2).

 \emph{\underline{Bifractional Brownian motion}}:\\
 To the best of our knowledge there is no study of the  problem of estimating the drift of (\ref{OU}) in the
case when $G$ is a bifractional Brownian motion with  parameters
$(H,K)\in (0,1)^2$. Section 3.3 is devoted to  this question.

\section{Asymptotic behavior of the  estimator}
 Let $G = \left(G_t,t\geq0\right)$  be a continuous centered Gaussian process defined on some probability
 space $(\Omega, \mathcal{F}, P)$ (Here,
and throughout the text, we assume that $\mathcal{F}$ is the
sigma-field generated by $G$). The following assumptions are
required.
\begin{itemize}
\item[$(\mathcal{H}1)$] The process $G$   has H\"older continuous paths of order $\delta\in(0,1]$.
\item[$(\mathcal{H}2)$] For every $t\geq0$, $E\left(G_t^2\right)\leq ct^{2\gamma}$ for some positive constants  $c$ and $ \gamma$.
\end{itemize}
\subsection{Strong consistency}
We will prove that the  estimator $\widetilde{\theta}_{t}$ given by
(\ref{estimator}) is strongly consistent.
\\ It is clear that the linear equation (\ref{OU}) has the following
explicit solution
\begin{eqnarray}\label{explicit solution}X_t=e^{\theta t}\int_0^te^{-\theta s}dG_s,\qquad t\geq0,
\end{eqnarray}
where the integral is interpreted in the Young sense (see Appendix).\\
Suppose $(\mathcal{H}1)$ holds. Applying the formula
(\ref{Integration by parts}) we can write
\begin{eqnarray}\label{representation of X} X_t=G_t+\theta e^{\theta
t}Z_t,\qquad t\geq0,
\end{eqnarray}
where \begin{eqnarray}Z_t:=\int_0^te^{-\theta s}G_sds,\qquad
t\geq0.\label{expr Z}\end{eqnarray} Let us introduce the following
process
\[\xi_t:=\int_0^te^{-\theta s}dG_s,\qquad t\geq0.\]
 Thus,  we can also write
\begin{eqnarray}\xi_t=e^{-\theta t}G_t+\theta Z_t,\qquad t\geq0.\label{xi and Z}\end{eqnarray}
\begin{remark}\label{remark1}Suppose that $G$ has H\"older continuous paths of order
 $\delta\in(\frac12,1]$. Then the process $X$  has $\delta$-H\"older continuous paths which implies
  that  the estimator $\widetilde{\theta}_t$ coincides with the LSE $\widehat{\theta}_t$
   by using (\ref{Integration by parts}). This property is satisfied in the cases of
    fractional Brownian motion with Hurst parameter $H>\frac12$, sub-fractional Brownian motion
    with  parameter $H>\frac12$ and bifractional Brownian motion with  parameters $(H,K)\in (0,1)^2$
     such that $HK>\frac12$ (see Section 3).\\
Indeed, let us prove that $X$  has $\delta$-H\"older continuous
paths. From (\ref{representation of X}), it suffices to prove that
the process $Z$ given in (\ref{expr Z}) has $\delta$-H\"older
continuous paths. Furthermore, Mean Value Theorem and the continuity
of $G$ entail that $Z$ has H\"older continuous paths of order $1$.
Thus, the result is obtained.
\end{remark}

The following theorem gives the strong consistency of the estimator
$\widetilde{\theta_t}$.

\begin{theorem}\label{consistency}
Assume that $(\mathcal{H}1)$ and $(\mathcal{H}2)$  hold and let
$\widetilde{\theta_t}$ be given by (\ref{estimator}). Then
\begin{eqnarray}\label{cv p.s.}\widetilde{\theta}_t\rightarrow \theta \mbox{ almost surely as }  t\to  \infty.
 \end{eqnarray}
\end{theorem}

In order to prove this theorem we make use of the following
technical lemmas. We will analyze separately the numerator and the
denominator in the right hand side of the estimator
(\ref{estimator}). The proofs of the following lemmas are given in
Appendix.

\begin{lemma}\label{convergence p.s of xi} Assume that $(\mathcal{H}1)$ and $(\mathcal{H}2)$  hold. Let $Z$ be the process
defined in (\ref{expr Z}). Then, $Z_{\infty}=\int_0^\infty
e^{-\theta s}G_sds$ is well-defined, and as $t\rightarrow\infty$
\begin{eqnarray} \label{cv Z} Z_t\longrightarrow Z_{\infty}\quad \mbox{almost surely and in $L^2(\Omega)$.}
\end{eqnarray}
Thus,  as $t\rightarrow\infty$
\begin{eqnarray}\label{cv xi}  \xi_t\longrightarrow \xi_\infty:=\theta Z_{\infty}
\quad \mbox{ almost surely and in $L^2(\Omega)$.}\end{eqnarray}
\end{lemma}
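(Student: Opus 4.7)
The strategy is to treat $Z_t$ first by absolute convergence of the improper integral, and then to deduce the convergence of $\xi_t$ from the representation $\xi_t=e^{-\theta t}G_t-\theta Z_t$ given in (\ref{xi and Z}), which reduces the problem to proving that the boundary term $e^{-\theta t}G_t$ vanishes.

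For $Z_t$, I would invoke (A2) and Jensen's inequality to get $\E|G_s|\leq\sqrt{c}\,s^{\gamma}$, so by Tonelli's theorem
\[
\E\int_0^\infty e^{-\theta s}|G_s|\,ds \leq \sqrt{c}\int_0^\infty e^{-\theta s}s^{\gamma}\,ds < \infty.
\]
This makes $Z_\infty$ well-defined pathwise almost surely, and the dominated convergence theorem yields $Z_t\to Z_\infty$ a.s. For the $L^2$ convergence, Minkowski's integral inequality together with (A2) gives
\[
\|Z_\infty-Z_t\|_{L^2}\leq \int_t^\infty e^{-\theta s}\|G_s\|_{L^2}\,ds \leq \sqrt{c}\int_t^\infty e^{-\theta s}s^{\gamma}\,ds \longrightarrow 0.
\]

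By (\ref{xi and Z}) it now suffices to show that $e^{-\theta t}G_t\to 0$ both almost surely and in $L^2$. The $L^2$ statement is immediate from (A2), since $\E[(e^{-\theta t}G_t)^2]\leq c\,t^{2\gamma}e^{-2\theta t}\to 0$. For almost sure convergence, I would first apply Chebyshev's inequality and (A2) at integer times: the bound $P(|e^{-\theta n}G_n|>\varepsilon)\leq c\,n^{2\gamma}/(\varepsilon^2 e^{2\theta n})$ is summable in $n$, so Borel--Cantelli gives $e^{-\theta n}G_n\to 0$ a.s. To pass from integer times to arbitrary $t\to\infty$, I would use (A1): on each $[n,n+1]$ the paths of $G$ are $\alpha$-H\"older continuous with some (random) seminorm $H_n$, so $\sup_{t\in[n,n+1]}|G_t-G_n|\leq H_n$ and hence $\sup_{t\in[n,n+1]}e^{-\theta t}|G_t|\leq e^{-\theta n}(|G_n|+H_n)$.

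The delicate step is showing that $e^{-\theta n}H_n\to 0$ almost surely. The pathwise regularity in (A1) does not itself furnish quantitative estimates on $H_n$, so one must combine (A1) with the polynomial $L^2$-growth in (A2) via a Gaussian concentration argument (such as the Borell--TIS inequality on the unit interval $[n,n+1]$, whose supremum has variance bounded by $c(n+1)^{2\gamma}$) and then apply Borel--Cantelli with a polynomial threshold such as $(n+1)^{\gamma+1}$. Granted this, $e^{-\theta t}G_t\to 0$ almost surely uniformly on $[n,n+1]$ as $n\to\infty$, which closes the proof.
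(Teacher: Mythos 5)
Your treatment of $Z_t$ is correct and rests on exactly the same estimate as the paper's, namely $\int_t^\infty e^{-\theta s}E|G_s|\,ds\le\sqrt{c}\int_t^\infty e^{-\theta s}s^\gamma\,ds\to0$. You package the almost sure convergence via absolute convergence of the improper integral plus dominated convergence, whereas the paper runs the identical tail bound through Markov's inequality and Borel--Cantelli over the blocks $[n,n+1]$; your version is if anything more economical. Your Minkowski bound for the $L^2$ part is equivalent to the paper's Cauchy--Schwarz expansion of the double integral $\int_t^\infty\int_t^\infty e^{-\theta r}e^{-\theta s}E(G_rG_s)\,dr\,ds$. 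So for (\ref{cv Z}) the two proofs are essentially the same.

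The divergence is in the passage to (\ref{cv xi}). The paper declares it a ``direct consequence'' of (\ref{cv Z}) and (\ref{xi and Z}), which silently requires $e^{-\theta t}G_t\to0$ almost surely; the only justification offered anywhere (in the proof of Theorem \ref{consistency}) is ``(A2) and Borel--Cantelli,'' which by itself yields convergence only along integer times. You are right that bridging integer times to continuous $t\to\infty$ requires controlling the oscillation of $G$ on $[n,n+1]$, and right that (A1), as a purely pathwise statement, gives no quantitative handle on the H\"older seminorms $H_n$. You have therefore located a soft spot in the paper rather than introduced one. That said, your proposed repair is not yet closed either: Borell--TIS controls the deviation of $\sup_{[n,n+1]}|G|$ from its \emph{mean} in terms of the variance $c(n+1)^{2\gamma}$, but (A1) and (A2) alone do not give a polynomial-in-$n$ bound on $E\sup_{[n,n+1]}|G|$, so a further ingredient is needed --- for instance a quantitative increment bound $E(G_t-G_s)^2\le C(n)|t-s|^{2\delta}$ with $C(n)$ polynomial, fed into Dudley's entropy bound or Garsia--Rodemich--Rumsey (all the examples of Section 4 satisfy this), or else reading (A1) as a single global H\"older condition on $[0,\infty)$, under which $\sup_{[n,n+1]}|G_t|\le|G_n|+H(\omega)$ and the conclusion is immediate. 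In short: same core argument for $Z$, and a more honest, though still incomplete, account of the one step the paper elides.
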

\begin{lemma}\label{convergence of dominating}
 Assume that $(\mathcal{H}1)$ and $(\mathcal{H}2)$  hold. Then, as $t\rightarrow \infty$, \[e^{-2\theta
t}\int_0^tX_s^2ds=e^{-2\theta t}\int_0^te^{2\theta
s}\xi_s^2ds\longrightarrow
\frac{\xi_{\infty}^2}{2\theta}=\frac{\theta}{2}Z_{\infty}^2\mbox{
almost surely. }\]
\end{lemma}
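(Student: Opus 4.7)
The plan is to reduce the convergence to a deterministic (pathwise) L'H\^opital argument, carried out on the full-measure set provided by Lemma \ref{convergence p.s of xi}.

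First, I would establish the equality asserted in the statement. The explicit solution (\ref{explicit solution}) gives $X_t=e^{\theta t}\xi_t$, hence
\[X_s^2=e^{2\theta s}\xi_s^2,\qquad s\geq 0,\]
so $e^{-2\theta t}\int_0^t X_s^2\,ds = e^{-2\theta t}\int_0^t e^{2\theta s}\xi_s^2\,ds$ pointwise, which takes care of the first equality for free.

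Second, I would fix $\omega$ in the almost-sure event where $\xi_t\to\xi_\infty$ (Lemma \ref{convergence p.s of xi}) and argue pathwise. Because $G$ has continuous sample paths (assumption (A1)), the representation (\ref{xi and Z}) shows that $s\mapsto\xi_s(\omega)$ is continuous on $[0,\infty)$. Set
\[F(t):=\int_0^t e^{2\theta s}\xi_s^2\,ds,\qquad H(t):=e^{2\theta t}.\]
Both $F$ and $H$ are differentiable, $H$ is strictly increasing with $H(t)\to\infty$ and $H'(t)=2\theta e^{2\theta t}\neq 0$, and
\[\frac{F'(t)}{H'(t)}=\frac{e^{2\theta t}\xi_t^2}{2\theta e^{2\theta t}}=\frac{\xi_t^2}{2\theta}\longrightarrow\frac{\xi_\infty^2}{2\theta}.\]
L'H\^opital's rule then gives $e^{-2\theta t}\int_0^t e^{2\theta s}\xi_s^2\,ds=F(t)/H(t)\to \xi_\infty^2/(2\theta)$ on the chosen event, hence almost surely.

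Finally, I would use the identification $\xi_\infty=-\theta Z_\infty$ from (\ref{cv xi}) to rewrite $\xi_\infty^2/(2\theta)=\theta Z_\infty^2/2$, which closes the proof. I do not anticipate a serious obstacle here: the only mildly delicate point is justifying the pathwise application of L'H\^opital, and this only requires continuity of the integrand $e^{2\theta s}\xi_s^2$ together with $H(t)\to\infty$, both of which are direct consequences of (A1) and Lemma \ref{convergence p.s of xi}. If one prefers to avoid L'H\^opital, the same conclusion follows from the elementary splitting $\int_0^t=\int_0^T+\int_T^t$, with $T$ chosen so that $|\xi_s^2-\xi_\infty^2|<\varepsilon$ for $s\geq T$: the first piece is crushed by the $e^{-2\theta t}$ factor and the second is bounded by $\varepsilon/(2\theta)$.
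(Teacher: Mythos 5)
Your proof is correct, and the core tool is the same as the paper's: a pathwise application of L'H\^opital's rule to the ratio $\int_0^t e^{2\theta s}\xi_s^2\,ds \big/ e^{2\theta t}$, after identifying $X_s=e^{\theta s}\xi_s$ from (\ref{explicit solution}) and using $\xi_\infty=-\theta Z_\infty$ at the end. The one genuine difference is in what you verify before invoking L'H\^opital. The paper spends the first half of its proof showing that the \emph{numerator} $\int_0^t e^{2\theta s}\xi_s^2\,ds$ tends to infinity almost surely: it computes $E[\xi_\infty^2]<\infty$, deduces $P(\xi_\infty=0)=0$ since $\xi_\infty$ is Gaussian, bounds the integral below by $\tfrac{t}{2}e^{\theta t}\inf_{t/2\le s\le t}\xi_s^2$, and only then applies L'H\^opital in the $\infty/\infty$ form. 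You bypass all of this by using the version of the rule (e.g.\ Rudin, Theorem 5.13) that requires only the denominator to diverge, with no hypothesis on the numerator; your fallback splitting $\int_0^t=\int_0^T+\int_T^t$ achieves the same thing with no L'H\^opital at all. This is a real simplification: it shows that the non-degeneracy $P(\xi_\infty=0)=0$ is not needed for this lemma (it is of course still needed elsewhere, e.g.\ to make sense of dividing by $Z_\infty$ in Theorem \ref{convergence distribution}). If you use the L'H\^opital route you should cite the precise version of the rule you are invoking, since many textbook statements of the $\infty/\infty$ case are phrased as if both numerator and denominator must diverge; the elementary splitting argument avoids even that quibble.
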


\begin{proof}[ Proof of Theorem \ref{consistency}]
By   (\ref{representation of X}), we can write
\begin{eqnarray}\widetilde{\theta}_t  =\frac{\left(G_t+\theta
e^{\theta t}Z_t\right)^2} {2\int_0^te^{2\theta s}\xi_s^2ds}
=\frac{\xi_t^2}{2e^{-2\theta t}\int_0^te^{2\theta
s}\xi_s^2ds}\label{rep thea tilde}.
\end{eqnarray}
The convergence (\ref{cv p.s.}) follows from  Lemmas
\ref{convergence p.s of xi} and  \ref{convergence of
dominating}.
\end{proof}

\subsection{Asymptotic distribution }
This section is devoted to the investigation of asymptotic
distribution of the estimator $\widetilde{\theta}_t$ of $\theta$.
Then, the following assumptions are required.
\begin{itemize}
\item[$(\mathcal{H}3)$] The limiting variance of $e^{-\theta t}\int_0^te^{\theta s}dG_s$  exists as
$t\rightarrow\infty$, i.e., there exists a constant $\sigma_G>0$ such
that
\[\lim_{t\rightarrow\infty}E\left[\left(e^{-\theta t}\int_0^te^{\theta s}dG_s\right)^2\right]\longrightarrow\sigma_G^2.\]
\item[$(\mathcal{H}4)$] For all fixed $s\geq0$
\begin{eqnarray*} \lim_{t\rightarrow\infty}E\left(G_se^{-\theta t}\int_0^te^{\theta
r}dG_r\right)=0.
\end{eqnarray*}
\end{itemize}

The next theorem proves that $\widetilde{\theta}_t$ is
asymptotically Cauchy.

\begin{theorem}\label{convergence distribution}Assume that $(\mathcal{H}1)$-$(\mathcal{H}4)$  hold. Then, as $t\rightarrow \infty$,
\begin{eqnarray}e^{\theta t}\left(\widetilde{\theta}_t-\theta\right)\overset{\texttt{law}}{\longrightarrow}\frac{2\sigma_G}{\sqrt{E\left(Z_\infty^2\right)}}\mathcal{C}(1),\label{cv in law}
\end{eqnarray}with  $\mathcal{C}(1)$ is the standard Cauchy distribution  with the probability density function
$\frac{1}{\pi (1+x^2)};\  x\in \mathbb{R}$.
\end{theorem}

In order to prove this theorem we need the following
lemmas. The proofs of these lemmas are given in Appendix.
\begin{lemma}\label{decomposition seconf moment of X}
Assume that $(\mathcal{H}1)$ holds. Then, for every $t\geq0$, we
have
\begin{eqnarray*}\frac12 X_t^2=\theta\int_0^t X_s^2 ds+\theta Z_t\int_0^te^{\theta
s}dG_s+R_t,
\end{eqnarray*} where $$R_t:=\frac12 G_t^2-\theta\int_0^t G_s^2 ds
+\theta^2\int_0^t ds\int_0^s dr G_sG_re^{-\theta(s-r)}.$$\end{lemma}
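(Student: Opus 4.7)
The plan is to verify the identity by a direct algebraic computation starting from the representation $X_t = G_t - \theta e^{\theta t} Z_t$ given in (\ref{representation of X}). No pathwise integral $\int_0^t X_s\,dG_s$ needs to be defined: the only integration by parts used is (\ref{Integration by parts}) in the case when one of the two factors is smooth, which is valid for any continuous $G$ and therefore under (A1) with an arbitrarily small H\"older exponent.

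I would first expand $X_t^2 = G_t^2 - 2\theta e^{\theta t} G_t Z_t + \theta^2 e^{2\theta t} Z_t^2$, and, term by term, $2\theta\int_0^t X_s^2\,ds$. The $Z^2$-piece is handled by noticing that $Z$ is $C^1$ with $Z_s' = e^{-\theta s} G_s$; the ordinary chain rule gives
\[
\tfrac{d}{ds}\bigl[e^{2\theta s} Z_s^2\bigr] = 2\theta e^{2\theta s} Z_s^2 + 2 e^{\theta s} G_s Z_s,
\]
so, integrating and multiplying by $\theta^2$,
\[
\theta^2 e^{2\theta t} Z_t^2 = 2\theta^3 \int_0^t e^{2\theta s} Z_s^2\,ds + 2\theta^2 \int_0^t e^{\theta s} G_s Z_s\,ds.
\]
The first term on the right cancels exactly the $\int_0^t e^{2\theta s} Z_s^2\,ds$ piece coming from $2\theta\int_0^t X_s^2\,ds$, so the identity reduces to a relation involving only $G_t^2$, the cross term $e^{\theta t} G_t Z_t$, $\int_0^t G_s^2\,ds$, and $\int_0^t e^{\theta s} G_s Z_s\,ds$.

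The remaining step is to re-express these in terms of $Z_t\eta_t$ (writing $\eta_t := \int_0^t e^{\theta s}\,dG_s$) and the double integral appearing in $R_t$. Integration by parts (\ref{Integration by parts}) applied to the smooth factor $e^{\theta s}$ against $G$ yields $\eta_t = e^{\theta t} G_t - \theta \int_0^t e^{\theta s} G_s\,ds$; multiplying by $Z_t$ and turning the product $Z_t \int_0^t e^{\theta r} G_r\,dr$ into a Riemann double integral over $[0,t]^2$ via Fubini, then splitting along the diagonal $\{r=s\}$, gives
\[
Z_t \int_0^t e^{\theta r} G_r\,dr = \int_0^t e^{\theta s} G_s Z_s\,ds + \int_0^t\!\int_0^s e^{-\theta(s-r)} G_s G_r\,dr\,ds,
\]
where the first piece on the right is precisely the lingering cross term, and the second is exactly the double integral appearing inside $R_t$. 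Substituting this back closes the computation.

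The main obstacle is purely bookkeeping: the auxiliary integral $\int_0^t e^{\theta s} G_s Z_s\,ds$ is produced in three separate places---by the chain rule on $e^{2\theta s}Z_s^2$, by expanding $\int_0^t X_s^2\,ds$, and by the Fubini splitting---and the formula materialises only once these three occurrences are combined with the correct signs and multiplicities. Since every manipulation is either ordinary calculus on $Z$ and on $e^{\pm \theta\cdot}$, or the integration-by-parts formula (\ref{Integration by parts}) applied with one smooth factor, no regularity beyond continuity of $G$ is actually invoked, which is consistent with working only under hypothesis (A1).
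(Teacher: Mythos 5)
Your route is genuinely different from the paper's: instead of introducing $\eta_t=\int_0^t X_s\,ds$ and exploiting $\tfrac12\eta_t^2=\int_0^t\eta_sX_s\,ds$ together with $\eta_s=\theta^{-1}(X_s-G_s)$, as the paper does, you expand everything through the representation (\ref{representation of X}) and ordinary calculus on the $C^1$ process $Z$. Your individual ingredients are sound: the derivative of $e^{2\theta s}Z_s^2$, the identity $\int_0^te^{\theta s}dG_s=e^{\theta t}G_t-\theta\int_0^te^{\theta s}G_s\,ds$, and the Fubini splitting
\begin{equation*}
Z_t\int_0^te^{\theta r}G_r\,dr=\int_0^te^{\theta s}G_sZ_s\,ds+\int_0^t\!\!\int_0^se^{-\theta(s-r)}G_sG_r\,dr\,ds
\end{equation*}
are all correct, and the last one is exactly what the paper obtains by integrating $\int_0^tZ_s\,dY_s$ by parts.

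However, the one step you defer as ``purely bookkeeping'' is precisely the step that fails. Carrying your plan out literally from $X_s=G_s-\theta e^{\theta s}Z_s$, the quantity $\tfrac12X_t^2-\theta\int_0^tX_s^2\,ds$ comes out, after the $Z^2$-cancellation, as $\tfrac12G_t^2-\theta\int_0^tG_s^2\,ds-\theta e^{\theta t}G_tZ_t+3\theta^2\int_0^te^{\theta s}G_sZ_s\,ds$ (one copy of the cross integral from the chain rule, two from $-\theta\int_0^tX_s^2\,ds$), whereas the right-hand side of the lemma, rewritten via your two identities, equals the same expression with coefficient $\theta^2$ instead of $3\theta^2$; the computation does not close. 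The root of the mismatch is that (\ref{representation of X}) itself carries a sign error: integration by parts gives $\int_0^te^{-\theta s}dG_s=e^{-\theta t}G_t+\theta Z_t$, hence $X_t=G_t+\theta e^{\theta t}Z_t$. With the corrected sign your method does close, but onto the identity $\tfrac12X_t^2=\theta\int_0^tX_s^2\,ds+\theta Z_t\int_0^te^{\theta s}dG_s+\tfrac12G_t^2-\theta\int_0^tG_s^2\,ds+\theta^2\int_0^t\!\int_0^sG_sG_re^{-\theta(s-r)}\,dr\,ds$, i.e.\ with the opposite signs on the two non-remainder terms (a numerical check on the smooth path $G_s=s$, $\theta=1$, $t=1$ confirms this version and refutes the printed one). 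So the proposal as written cannot be completed into a proof of the statement as printed: you must either fix (\ref{representation of X}) and prove the sign-corrected identity, or explain why you believe the printed signs are attainable. The downstream arguments are essentially unaffected by the sign, since they only use $R_t\to0$ and the symmetry of the Gaussian limit, but the lemma itself should be restated.
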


\begin{lemma}\label{lemma in law}Assume that $(\mathcal{H}1)$, $(\mathcal{H}3)$ and $(\mathcal{H}4)$  hold. Let $F$ be any $\sigma\{G\}$-measurable random variable such that $P(F<\infty)=1$. Then, as $t\rightarrow\infty$,  \begin{eqnarray*}\left(F, e^{-\theta t}\int_0^te^{\theta s}dG_s\right)\overset{\texttt{law}}{\longrightarrow}\left(F,\sigma_GN\right),
\end{eqnarray*}where $N\sim \mathcal{N}(0,1)$ is independent of $G$.
\end{lemma}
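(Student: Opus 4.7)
The plan is to show that, as $t\to\infty$, the random variable $M_t:=e^{-\theta t}\int_0^te^{\theta s}dG_s$ becomes a centered Gaussian that is asymptotically independent of the whole $\sigma$-algebra generated by $G$; since $F$ is $\sigma\{G\}$-measurable (the ``$B$'' in the statement being understood as $G$), this asymptotic independence at once delivers the joint limit. The first ingredient is that, for each $t\ge 0$, $M_t$ itself is centered Gaussian: by (\ref{Integration by parts}) one has $M_t=G_t-\theta e^{-\theta t}\int_0^te^{\theta s}G_s\,ds$, which is a linear functional of $(G_s)_{0\le s\le t}$; combined with (A3) this already gives the marginal convergence $M_t\overset{\texttt{law}}{\longrightarrow}\sigma_G N$.

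For the joint statement it suffices, by a standard approximation argument, to prove that for every bounded continuous $\psi:\R\to\R$ and every bounded $\sigma\{G\}$-measurable $F$,
\[
E\bigl[F\,\psi(M_t)\bigr] \longrightarrow E[F]\cdot E\bigl[\psi(\sigma_G N)\bigr]\qquad \text{as }t\to\infty.
\]
I would first check this for cylinder functionals $F=\phi(G_{s_1},\ldots,G_{s_k})$ with $\phi$ bounded continuous and $s_1,\ldots,s_k\ge 0$ fixed. For such $F$ the vector $(G_{s_1},\ldots,G_{s_k},M_t)$ is jointly Gaussian: the block of covariances $(E[G_{s_i}G_{s_j}])_{1\le i,j\le k}$ does not depend on $t$, the cross-covariances $E[G_{s_j}M_t]$ vanish in the limit by (A4), and $E[M_t^2]\to\sigma_G$ by (A3). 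Continuity of the Gaussian law in its parameters therefore gives
\[
(G_{s_1},\ldots,G_{s_k},M_t)\overset{\texttt{law}}{\longrightarrow}(G_{s_1},\ldots,G_{s_k},\sigma_G N)
\]
with $N$ independent of $(G_{s_1},\ldots,G_{s_k})$, and evaluating the bounded continuous function $(x,y)\mapsto\phi(x)\psi(y)$ at this limit yields the displayed factorisation.

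I would then extend from cylinder $F$ to arbitrary bounded $\sigma\{G\}$-measurable $F$ via a monotone class argument, and finally to any $F$ with $P(F<\infty)=1$ by truncation $F_n:=F\mathbf{1}_{\{|F|\le n\}}$, passing to $n\to\infty$ using the uniform $L^2$-boundedness (hence tightness) of the Gaussian family $(M_t)_{t\geq 0}$. The main technical point is precisely this extension from finite-dimensional cylinder $F$ to a general $\sigma\{G\}$-measurable $F$; an equivalent but more compact route would work directly with characteristic functions, showing that
\[
E\!\left[\exp\!\Big(i\sum_{j=1}^k\lambda_j G_{s_j}+i\mu M_t\Big)\right] \longrightarrow E\!\left[\exp\!\Big(i\sum_{j=1}^k\lambda_j G_{s_j}\Big)\right]\,e^{-\sigma_G\mu^2/2},
\]
which follows at once from joint Gaussianity together with (A3)--(A4) and determines the limit law of $(F,M_t)$ through its finite-dimensional distributions.
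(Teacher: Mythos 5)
Your proposal is correct and follows essentially the same route as the paper: reduce to the finite-dimensional vectors $(G_{s_1},\dots,G_{s_k},M_t)$, observe they are jointly Gaussian since $M_t$ is a linear functional of the path of $G$, and check convergence of the covariance matrix via (A3) and (A4); the paper then simply asserts that this finite-dimensional (i.e.\ stable) convergence suffices for general $\sigma\{G\}$-measurable $F$, citing an argument from \cite{bridge}, whereas you spell out that extension by monotone class and truncation. The only (harmless) discrepancy is the variance normalisation: with (A3) giving $E[M_t^2]\to\sigma_G$, the limit should be $\sqrt{\sigma_G}\,N$, a slip already present in the paper's own statement.
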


\begin{proof}[Proof of Theorem \ref{convergence distribution}] Using Lemma \ref{decomposition seconf moment of X}
  we can write
\begin{eqnarray*}
e^{\theta t}\left(\widetilde{\theta}_t-\theta\right)&=&\frac{
e^{-\theta t}\int_0^te^{\theta s}dG_s}{Z_\infty}\times\frac{\theta
Z_tZ_\infty}{e^{-2\theta t}\int_0^tX_s^2ds}+\frac{e^{-\theta
t}R_t}{e^{-2\theta
t}\int_0^tX_s^2ds}\\
&:=& a_t\times b_t+c_t.
\end{eqnarray*}
Lemma \ref{lemma in law} yields, as $t\rightarrow\infty$,
\[a_t\overset{\texttt{law}}{\longrightarrow}\sigma_G\frac{N}{Z_{\infty}},\]
where $N\sim \mathcal{N}(0,1)$ is independent of $G$, whereas Lemmas
\ref{convergence p.s of xi} and \ref{convergence of dominating}
imply that $b_t \longrightarrow 2$ almost surely as
$t\rightarrow\infty$. On the other hand , $e^{-\theta
t}R_t\longrightarrow0$ in $L^1(\Omega)$ as $t\rightarrow\infty$
because, as $t\rightarrow\infty$,
\begin{eqnarray*}
 &&e^{-\theta t} E\left(G_t^2\right)\leq ct^{2\gamma}e^{-\theta t} \longrightarrow0,\\
&&e^{-\theta t} \int_0^tE\left(G_s^2\right)ds\leq c\frac{t^{2\gamma+1}}{2\gamma+1}e^{-\theta t}\longrightarrow0,\\
\end{eqnarray*}
and
\begin{eqnarray*} e^{-\theta t} \int_0^t ds\int_0^s dr
E\left|G_sG_r\right|e^{-\theta(s-r)}\leq c e^{-\theta t} \int_0^t
ds\int_0^s dr (sr)^\gamma =\frac{c
t^{2\gamma+2}}{(\gamma+1)(2\gamma+2)}e^{-\theta t}\longrightarrow0.
\end{eqnarray*}
Combining this with Lemma \ref{convergence of dominating}, we obtain that $c_t \longrightarrow 0$ in probability as $t\rightarrow\infty$.\\
By plugging all these convergences together, we get that, as
$t\rightarrow\infty$,
\begin{eqnarray*}e^{\theta t}\left(\widetilde{\theta}_t-\theta\right)\overset{\texttt{law}}{\longrightarrow}2\sigma_G\frac{N}{Z_{\infty}}.
\end{eqnarray*}
Moreover, $Z_{\infty}\thicksim
\mathcal{N}\left(0,E\left[Z_{\infty}^2\right]\right)$, which
completes the proof.\end{proof}

\section{Applications to fractional Gaussian processes}
This section is devoted to some examples of the Gaussian process $G$
given in (\ref{OU}). We will need the following technical lemmas.
\begin{lemma}\label{lemma key1 of applications}Let $g : [0,\infty)\times[0,\infty)\longrightarrow\R$
 be a symmetric function such that $\frac{\partial  g }{\partial s }(s,r)$
 and $\frac{\partial^2 g }{\partial s\partial r}(s,r)$ exist on   $(0,\infty)\times[0,\infty)$. Then, for every $t\geq0$,
\begin{eqnarray}  \Delta_g(t)&:=&g(t,t)-2\theta e^{-\theta t}\int_0^t g(s,t)e^{\theta s}ds
+\theta^2 e^{-2\theta t}\int_0^t\int_0^t g(s,r)e^{\theta (s+r)}dr ds
\nonumber\\&=&2e^{-2\theta t}\int_0^te^{\theta s} \frac{\partial g
}{\partial s}(s,0)ds+2 e^{-2\theta t}\int_0^tdse^{\theta s}\int_0^s
dr\frac{\partial^2 g }{\partial s\partial r}(s,r)e^{\theta
r}.\label{key1}
\end{eqnarray}
\end{lemma}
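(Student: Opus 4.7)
The plan is to prove (\ref{key1}) by three successive integrations by parts. Each IBP is of the form $\int e^{\theta u}h(u)\,du = [e^{\theta u}h(u)/\theta] - (1/\theta)\int e^{\theta u}h'(u)\,du$, so it strips off one exponential factor and introduces one derivative of $g$. All the boundary terms produced along the way should cancel pairwise thanks to the symmetry hypothesis $g(s,r)=g(r,s)$.

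First I would apply IBP in $s$ to the single integral in $\Delta_g(t)$, obtaining
\[
-2\theta e^{-\theta t}\int_0^t e^{\theta s}g(s,t)\,ds = -2g(t,t) + 2e^{-\theta t}g(0,t) + 2e^{-\theta t}\int_0^t e^{\theta s}\frac{\partial g}{\partial s}(s,t)\,ds.
\]
Second, I would use the symmetry of $g$ to fold the double integral onto the triangle $\{0\le r\le s\le t\}$ and IBP its inner $r$-integral. This leads to three single integrals in $s$ involving $e^{2\theta s}g(s,s)$, $e^{\theta s}g(s,0)$, and $e^{\theta s}\int_0^s e^{\theta r}\frac{\partial g}{\partial s}(s,r)\,dr$ respectively, each of which is IBP'd once more in $s$. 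The key symmetry identities $\frac{d}{ds}g(s,s)=2\frac{\partial g}{\partial s}(s,s)$ and $\frac{\partial g}{\partial r}(s,s)=\frac{\partial g}{\partial s}(s,s)$ are what make two copies of $\int_0^t e^{2\theta s}\frac{\partial g}{\partial s}(s,s)\,ds$ produced with opposite signs cancel exactly.

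Finally, I would add up all contributions to $\Delta_g(t)$. The three occurrences of $g(t,t)$ combine to $+g(t,t)-2g(t,t)+g(t,t)=0$; the pair $2e^{-\theta t}g(0,t)$ and $-2e^{-\theta t}g(t,0)$ cancels by symmetry of $g$; and the two boundary integrals $2e^{-\theta t}\int_0^t e^{\theta s}\frac{\partial g}{\partial s}(s,t)\,ds$ and $-2e^{-\theta t}\int_0^t e^{\theta r}\frac{\partial g}{\partial r}(t,r)\,dr$ cancel via $\frac{\partial g}{\partial s}(s,t)=\frac{\partial g}{\partial r}(t,s)$. What survives is $e^{-2\theta t}g(0,0)$ plus precisely the two integrals on the right-hand side of (\ref{key1}); the residual $g(0,0)$ vanishes because in the paper's setting $G_0=0$, so the covariance $g$ satisfies $g(0,\cdot)\equiv 0$.

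The main obstacle is purely the combinatorial bookkeeping: three nested IBPs generate roughly a dozen boundary terms, and the symmetry of $g$ has to be invoked at precisely the right places to observe that everything cancels except the two target integrals. The Hölder regularity hypotheses on the first partials and on $\partial^2 g/\partial s\partial r$ are used only to legitimise these classical integrations by parts.
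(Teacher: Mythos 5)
Your proof is correct and follows essentially the same route as the paper's: repeated integration by parts against the exponential factors, with the symmetry identities $\frac{d}{ds}g(s,s)=2\frac{\partial g}{\partial s}(s,s)$ and $\frac{\partial g}{\partial r}(s,s)=\frac{\partial g}{\partial s}(s,s)$ driving the cancellations; the paper merely organizes the bookkeeping through the auxiliary function $h(s)=\int_0^s g(s,r)e^{\theta r}\,dr$ and integrates the outer variable first, so that the cross term of $\Delta_g$ cancels at once and fewer boundary terms appear. Your residual term $e^{-2\theta t}g(0,0)$ is a genuine observation: the identity as stated implicitly requires $g(0,0)=0$, a hypothesis the lemma omits and the paper's proof silently drops, although it does hold for every $g$ used in Section 4.
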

\begin{proof} Set $h(s):=\int_0^s g(s,r)e^{\theta r}dr$. Combining  the integration by parts formula together with
\[\frac{\partial h }{\partial s}(s)=e^{\theta s}g(s,s)+\int_0^s \frac{\partial g }{\partial s}(s,r)e^{\theta r}dr,\] we obtain
\begin{eqnarray*} \Delta_g(t)&=&g(t,t)-2\theta e^{-2\theta t}\int_0^t  g(s,s) e^{2\theta s}ds
-2\theta e^{-2\theta t}\int_0^tdse^{\theta s}\int_0^s
dr\frac{\partial g }{\partial s}(s,r)e^{\theta r}\\&=&e^{-2\theta
t}\int_0^t \frac{\partial g(s,s)}{\partial s}(s)e^{2\theta s}ds
-2\theta e^{-2\theta t}\int_0^tdse^{\theta s}\int_0^s
dr\frac{\partial g }{\partial s}(s,r)e^{\theta r}.
\end{eqnarray*} Since $g$ is symmetric,
 $2\frac{\partial g }{\partial s}(s,r)1_{\{r=s\}}=\frac{\partial g(s,s)}{\partial s}(s)$ where
$\frac{\partial g(s,s)}{\partial s}$ is the derivative of the
function $s\longrightarrow g(s,s)$. Thus by using again the
integration by parts formula, the claim (\ref{key1}) is obtained.
\end{proof}
\begin{lemma}\label{lemma key2 of applications}
Let $\lambda>-1$. Define
\begin{eqnarray*}J_{\lambda}(t):=e^{-2\theta t}\int_0^t\int_0^te^{\theta s}e^{\theta r}|s-r|^{\lambda}drds; \quad
I_{\lambda}(t):=e^{-\theta t}\int_0^t e^{\theta
r}(t-r)^{\lambda}dr.
\end{eqnarray*}Then
\begin{eqnarray} \lim_{t\rightarrow\infty}J_{\lambda}(t)=\lim_{t\rightarrow\infty}\left(\frac{1}{\theta} I_{\lambda}(t)\right)=\frac{\Gamma(\lambda+1)}{\theta^{\lambda+2}} .\label{key2}
\end{eqnarray}
\end{lemma}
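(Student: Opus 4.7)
The plan is to reduce both limits to the classical formula $\int_0^\infty e^{-\theta u} u^\lambda\,du = \Gamma(\lambda+1)/\theta^{\lambda+1}$ by simple changes of variable, and then apply L'Hôpital's rule to handle the exponential prefactor in $J_\lambda$.

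For $I_\lambda(t)$, I would substitute $u = t-s$ directly. The exponential $e^{-\theta t}\cdot e^{\theta s} = e^{-\theta u}$ disappears, and one is left with
\[
I_\lambda(t) = \int_0^t e^{-\theta u} u^\lambda\,du \;\xrightarrow[t\to\infty]{}\; \frac{\Gamma(\lambda+1)}{\theta^{\lambda+1}},
\]
since $\lambda > -1$ makes the integral near $0$ convergent and the exponential handles the tail. Dividing by $\theta$ gives the stated limit.

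For $J_\lambda(t)$, the first step is to exploit the symmetry of the integrand in $(s,r)$ and write
\[
J_\lambda(t) = 2 e^{-2\theta t}\int_0^t e^{\theta s}\left(\int_0^s e^{\theta r}(s-r)^\lambda\,dr\right)ds.
\]
Then substituting $u = s-r$ in the inner integral yields $\int_0^s e^{\theta r}(s-r)^\lambda\,dr = e^{\theta s}\int_0^s e^{-\theta u}u^\lambda\,du$, so
\[
J_\lambda(t) = \frac{2\int_0^t e^{2\theta s}\varphi(s)\,ds}{e^{2\theta t}}, \qquad \varphi(s) := \int_0^s e^{-\theta u}u^\lambda\,du.
\]
Since $\varphi(s) \to \Gamma(\lambda+1)/\theta^{\lambda+1}$, the numerator and denominator both tend to $\infty$, so L'Hôpital's rule applies and the ratio converges to $2 e^{2\theta t}\varphi(t)/(2\theta e^{2\theta t}) = \varphi(t)/\theta$, which tends to $\Gamma(\lambda+1)/\theta^{\lambda+2}$ as desired.

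This is essentially routine calculus; the only mild point of attention is ensuring the integrability at $u = 0$ (which is exactly the hypothesis $\lambda > -1$) so that Fubini and the substitution are legitimate, and verifying that the denominator in the L'Hôpital step genuinely diverges, which follows from $\varphi(s) \to \Gamma(\lambda+1)/\theta^{\lambda+1} > 0$ together with $e^{2\theta s} \to \infty$. No serious obstacle is expected.
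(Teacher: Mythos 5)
Your proof is correct. Up to the substitution $u=s-r$ your argument is identical to the paper's: both exploit the symmetry of $|s-r|^\lambda$ to reduce $J_\lambda(t)$ to $2e^{-2\theta t}\int_0^t e^{2\theta s}\bigl(\int_0^s e^{-\theta u}u^\lambda du\bigr)ds$, and both treat $I_\lambda$ by the trivial change of variable $u=t-s$. You diverge only in the final step: the paper applies Fubini to swap the order of integration, evaluates $\int_u^t e^{2\theta s}ds$ in closed form, and obtains the exact identity $J_\lambda(t)=\frac{1}{\theta}\bigl(\int_0^t u^\lambda e^{-\theta u}du - e^{-2\theta t}\int_0^t u^\lambda e^{\theta u}du\bigr)$, whereas you keep the iterated integral and invoke L'H\^opital's rule. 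The two finishes are of comparable length; the paper's gives an explicit formula for $J_\lambda(t)$ at finite $t$ (at the cost of still having to argue, implicitly, that $e^{-2\theta t}\int_0^t u^\lambda e^{\theta u}du\to 0$), while yours avoids that residual term entirely since only the divergence of the denominator $e^{2\theta t}$ is needed for the $\infty/\infty$ form of L'H\^opital. Either way the conclusion $\Gamma(\lambda+1)/\theta^{\lambda+2}$ follows, and your attention to integrability at $u=0$ (the hypothesis $\lambda>-1$) is exactly the right point to flag.
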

\begin{proof} Let $t\geq0$. We have
\begin{eqnarray*}J_{\lambda}(t)
&=&2e^{-2\theta t}\int_0^tdse^{\theta s}\int_0^s dr e^{\theta r}(s-r)^{\lambda}\\
&=&2e^{-2\theta t}\int_0^tdse^{2\theta s}\int_0^s dr e^{-\theta u}u^{\lambda}\\
&=&2e^{-2\theta t}\int_0^t du e^{-\theta u}u^{\lambda}\int_u^t dse^{2\theta s}\\
&=&\frac{1}{\theta}\left(\int_0^t u^{\lambda}e^{-\theta
u}du-e^{-2\theta t}\int_0^t u^{\lambda} e^{\theta u}du
\right)\\&\rightarrow&\frac{\Gamma(\lambda+1)}{\theta^{\lambda+2}}
\  \mbox{ as }t\rightarrow \infty,
\end{eqnarray*}
which proves (\ref{key2}).
\end{proof}
\subsection{Fractional Brownian motion}
The  fractional Brownian motion (fBm) $B^H=\left(B^H_t,
t\geq0\right)$ with Hurst parameter $H\in(0,1)$ is defined as a
centered Gaussian process starting from zero with covariance
\[E\left(B^H_tB^H_s\right)=\frac{1}{2}\left(t^{2H}+s^{2H}-|t-s|^{2H}\right).\]
Note that, when $H=\frac12$, $B^{\frac12}$ is a standard Brownian
motion.
\begin{proposition}\label{fBm case}Suppose that, in (\ref{OU}), the process $G$ is the fBm $B^H$. Then for all fixed  $H\in (0,1)$
the convergences (\ref{cv p.s.}) and (\ref{cv in law}) hold.
\end{proposition}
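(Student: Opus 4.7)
The plan is to verify the four hypotheses (A1)--(A4) for $G=B^H$ and then invoke Theorems \ref{consistency} and \ref{convergence distribution}. Hypothesis (A1) is immediate because fractional Brownian motion admits a version with $(H-\varepsilon)$-H\"older continuous paths for every $\varepsilon\in(0,H)$, and (A2) follows from $E[(B^H_t)^2]=t^{2H}$ by taking $c=1$ and $\gamma=H$. Only (A3) and (A4) require genuine work.

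For (A3), the Young integration by parts formula (\ref{Integration by parts}) is available for all $H\in(0,1)$ since $s\mapsto e^{\theta s}$ is smooth, and it yields
\begin{equation*}
I_t:=e^{-\theta t}\int_0^t e^{\theta s}\,dB^H_s=B^H_t-\theta e^{-\theta t}\int_0^t e^{\theta s}B^H_s\,ds.
\end{equation*}
Squaring and taking expectation produces precisely the quantity $\Delta_{R_H}(t)$ of Lemma \ref{lemma key1 of applications} with $g=R_H(s,r)=\tfrac12(s^{2H}+r^{2H}-|s-r|^{2H})$. When $H>\tfrac12$ the hypotheses of that lemma are met: an elementary computation gives $\tfrac{\partial R_H}{\partial s}(s,0)=0$ and $\tfrac{\partial^2 R_H}{\partial s\partial r}(s,r)=H(2H-1)|s-r|^{2H-2}$, so Lemma \ref{lemma key2 of applications} yields
\begin{equation*}
\lim_{t\to\infty}E[I_t^2]=H(2H-1)\frac{\Gamma(2H-1)}{\theta^{2H}}=\frac{H\Gamma(2H)}{\theta^{2H}}=:\sigma_{B^H}.
\end{equation*}
For $H\leq\tfrac12$, the mixed second derivative is no longer locally integrable and Lemma \ref{lemma key1 of applications} does not apply, so I would instead expand $E[I_t^2]$ directly: the $s^{2H}$ and $r^{2H}$ contributions in $\Delta_{R_H}(t)$ factorize, the $t^{2H}$-terms cancel against a boundary piece, and the $|s-r|^{2H}$-contribution reduces to $-\tfrac{\theta^2}{2}J_{2H}(t)$, whose limit is provided by Lemma \ref{lemma key2 of applications}. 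The algebra delivers the same value $H\Gamma(2H)/\theta^{2H}$ for every $H\in(0,1)$.

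Hypothesis (A4) follows analogously from the identity $E(B^H_sI_t)=R_H(s,t)-\theta e^{-\theta t}\int_0^t e^{\theta r}R_H(s,r)\,dr$, split along the three summands of $R_H$. This is the most delicate step, because for $H>\tfrac12$ the individual pieces grow like $t^{2H}$; the main obstacle is to extract the cancellations via a single integration by parts in $\theta e^{-\theta t}\int_0^t e^{\theta r}r^{2H}\,dr=t^{2H}-2He^{-\theta t}\int_0^t e^{\theta r}r^{2H-1}\,dr$ combined with the change of variables $u=r-s$ inside $\theta e^{-\theta t}\int_s^t e^{\theta r}(r-s)^{2H}\,dr$. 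After these manipulations the $t^{2H}$ contributions cancel, leaving a remainder of order $t^{2H-1}+e^{-\theta t}$ which tends to $0$. With (A1)--(A4) verified, Theorems \ref{consistency} and \ref{convergence distribution} deliver (\ref{cv p.s.}) and (\ref{cv in law}) respectively.
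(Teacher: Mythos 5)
Your overall strategy --- verify (A1)--(A4) and invoke Theorems \ref{consistency} and \ref{convergence distribution} --- is exactly the paper's, and your treatment of (A1), (A2) and the limiting value $H\Gamma(2H)/\theta^{2H}$ are correct. Two steps, however, do not hold up as written.

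First, for (A3) with $H>\frac12$ you apply Lemma \ref{lemma key1 of applications} to the full covariance $R_H(s,r)=\frac12\left(s^{2H}+r^{2H}-|s-r|^{2H}\right)$ and assert that its hypotheses are met. They are not: $\frac{\partial^2 R_H}{\partial s\partial r}(s,r)=H(2H-1)|s-r|^{2H-2}$ is unbounded on the diagonal, hence not H\"older continuous, so the lemma as stated does not apply and would need a separate approximation argument. The paper avoids both this issue and your case split by applying Lemma \ref{lemma key1 of applications} only to the smooth symmetric part $g_{B^H}(s,r)=\frac12\left(s^{2H}+r^{2H}\right)$, whose mixed second derivative vanishes identically, and feeding the singular part $-\frac12|s-r|^{2H}$ directly into Lemma \ref{lemma key2 of applications}; this yields $E[I_t^2]=\Delta_{g_{B^H}}(t)+\theta I_{2H}(t)-\frac{\theta^2}{2}J_{2H}(t)$ in one stroke for every $H\in(0,1)$. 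Your fallback ``direct expansion'' for $H\le\frac12$ is precisely this decomposition, so the honest fix is to use it for all $H$ and drop the case distinction.

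Second, and more seriously, your accounting in (A4) fails for $H>\frac12$. After the integration by parts and the change of variables $u=r-s$ that you describe, the $t^{2H}$ terms do cancel, but what remains (up to exponentially small terms) is the difference
\[
2H e^{-\theta t}\int_0^t e^{\theta r}r^{2H-1}\,dr-2H e^{-\theta t}\int_s^t e^{\theta r}(r-s)^{2H-1}\,dr,
\]
each summand of which is asymptotic to $\frac{2H}{\theta}t^{2H-1}$ and hence \emph{diverges} when $H>\frac12$; a remainder ``of order $t^{2H-1}$'' does not tend to zero in that regime. A second cancellation is required: by L'H\^opital the difference behaves like $\frac{2H}{\theta}\left(t^{2H-1}-(t-s)^{2H-1}\right)$, and the mean value theorem gives $\left|t^{2H-1}-(t-s)^{2H-1}\right|\le |2H-1|\,s\,(t-s)^{2H-2}\to 0$. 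This is exactly the estimate the paper uses in (\ref{cv1 fBm case}); without it your verification of (A4) is incomplete precisely where the statement is hardest.
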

\begin{proof}
 By  Kolmogorov's continuity criterion
and the fact  $$E\left(B^H_t-B^H_s\right)^2=|s-t|^{2H};\ s,\
t\geq~0,$$ we deduce that $B^H$ has H\"older continuous paths of
order
$H-\varepsilon$ for all $\varepsilon\in(0,H)$. So, the process $B^H$ satisfies the assumptions
 $(\mathcal{H}1)$ and $(\mathcal{H}2)$. Thus   the strong consistence (\ref{cv p.s.})
 is obtained in the case when $G=B^H$.\\
For the convergence (\ref{cv in law}), it suffices to check
$(\mathcal{H}3)$ and $(\mathcal{H}4)$.
 Let us first compute the
limiting variance of $e^{-\theta t}\int_0^te^{\theta s}dB^H_s$  as
$t\rightarrow\infty$. We have
\begin{eqnarray}&&E\left[\left(e^{-\theta t}\int_0^te^{\theta
s}dB^H_s\right)^2\right]=
E\left[\left(e^{-\theta t}\left(e^{\theta
t}B^H_t-\theta\int_0^te^{\theta
s}B^H_sds\right)\right)^2\right]\nonumber\\&=& t^{2H}-2\theta
e^{-\theta t}\int_0^te^{\theta s}
E(B^H_sB^H_t)ds+\theta^2e^{-2\theta t}\int_0^t\int_0^te^{\theta
s}e^{\theta r}E(B^H_sB_r)dsdr\nonumber\\&=& t^{2H}-\theta e^{-\theta
t}\int_0^te^{\theta s}
\left(s^{2H}+t^{2H}-(t-s)^{2H}\right)ds\nonumber\\&&+\frac12\theta^2e^{-2\theta
t}\int_0^t\int_0^te^{\theta s}e^{\theta r}
\left(s^{2H}+r^{2H}-|r-s|^{2H}\right)dsdr\nonumber\\&=&
\Delta_{g_{B^H}}(t)+\theta I_{2H}(t)-\frac{\theta^2}{2}
J_{2H}(t),\label{Delta+I+J fBm}
\end{eqnarray}where $g_{B^H}(s,r)=\frac12 \left(s^{2H}+t^{2H}\right)$.\\
On the other hand,  (\ref{key1}) implies that
\begin{eqnarray}
\Delta_{g_{B^H}}(t)=2He^{-2\theta t}\int_0^ts^{2H-1}e^{\theta s}ds
&\rightarrow&0 \mbox{ as } t\rightarrow\infty.\label{Delta fBm}
\end{eqnarray}
Combining (\ref{key2})-(\ref{Delta fBm}),
we get for every $H\in(0,1)$
\begin{eqnarray*}E\left[\left(e^{-\theta t}\int_0^te^{\theta s}dB^H_s\right)^2\right]&\longrightarrow&\frac{H\Gamma(2H)}{\theta^{2H}}\quad\mbox{ as $t\rightarrow\infty$}.
\end{eqnarray*}
Hence, to finish the proof it remains to check that, for all fixed
$s\geq0$
\begin{eqnarray*} \label{cv Bs int}\lim_{t\rightarrow\infty}E\left(B^H_se^{-\theta t}\int_0^te^{\theta
r}dB^H_r\right)=0.
\end{eqnarray*}
Let us consider $s<t$.  Setting $f_{B^H}(s,r)=E(B^H_sB^H_r)$, it
follows from (\ref{Integration by parts}) that
\begin{eqnarray*} E\left(B^H_se^{-\theta t}\int_0^te^{\theta
r}dB^H_r\right) &=&f_{B^H}(s,t)- \theta e^{-\theta
t}\int_0^te^{\theta r}f_{B^H}(s,r)dr
\\&=&f_{B^H}(s,t)- \theta e^{-\theta
t}\int_s^te^{\theta r}f_{B^H}(s,r)dr- \theta e^{-\theta
t}\int_0^se^{\theta r}f_{B^H}(s,r)dr
\\&=&  e^{-\theta (t-s)}f_{B^H}(s,s)+  e^{-\theta t}\int_s^te^{\theta r}
\frac{\partial f_{B^H}}{\partial r}(s,r)dr- \theta e^{-\theta
t}\int_0^se^{\theta r}f_{B^H}(s,r)dr.
\end{eqnarray*}
It is clear that $e^{-\theta (t-s)}f_{B^H}(s,s)- \theta e^{-\theta t}\int_0^se^{\theta r}f_{B^H}(s,r)dr\longrightarrow0$ as $t\rightarrow\infty$.\\
Furthermore, if $H=\frac12$,   $\frac{\partial f_{B^H}}{\partial
r}(s,r)=0$ for every $r>s$. Then, for $H=\frac12$
\begin{eqnarray*}
e^{-\theta t}\int_s^te^{\theta r}\frac{\partial f_{B^H}}{\partial
r}(s,r)dr=0.
\end{eqnarray*}
Now, suppose that $H\in(0,\frac12)\cup(\frac12,1)$. Since
\begin{eqnarray*}
\int_s^te^{\theta r}\left|r^{2H-1}-(r-s)^{2H-1}\right|dr&\geq&
|2H-1|s\int_s^te^{\theta r}  r^{2H-2}  dr
\\&\geq& |2H-1|st^{2H-2}\int_s^te^{\theta r}dr\\&\rightarrow&\infty \mbox{ as } t\rightarrow\infty,
\end{eqnarray*}
we can apply L'H\^ospital's rule to obtain
\begin{eqnarray}
\lim_{t\rightarrow\infty}\left|e^{-\theta t}\int_s^te^{\theta
r}\frac{\partial f_{B^H}}{\partial r}(s,r)dr\right|
&=&\lim_{t\rightarrow\infty}\left|He^{-\theta t}\int_s^te^{\theta r}\left(r^{2H-1}-(r-s)^{2H-1}\right)dr\right|\nonumber\\
&\leq&\lim_{t\rightarrow\infty}\left(He^{-\theta t}\int_s^te^{\theta
r}\left| r^{2H-1}-(r-s)^{2H-1}\right|dr\right)
\nonumber\\&=&\lim_{t\rightarrow\infty}\left(\frac{H}{\theta}\left|
t^{2H-1}-(t-s)^{2H-1}\right|\right)
\nonumber\\&\leq&\lim_{t\rightarrow\infty}\left(\frac{sH|2H-1|}{\theta}
(t-s)^{2H-2}\right) \nonumber\\&\rightarrow&0 \mbox{ as }
t\rightarrow\infty \label{cv1 fBm case},
\end{eqnarray}
which finishes the proof of Proposition \ref{fBm case}.
\end{proof}
\subsection{Sub-fractional Brownian motion}
The sub-fractional Brownian motion (sfBm) $S^H$ with parameter
$H\in(0, 1)$   is a centred Gaussian process with covariance
function
\[E\left(S^H_tS^H_s\right)=t^{2H}+s^{2H}-\frac{1}{2}\left((t+s)^{2H}+|t-s|^{2H}\right).\]
Note that, when $H=\frac12$, $S^{\frac12}$ is a standard Brownian
motion.
\begin{proposition} Suppose that, in (\ref{OU}), the process $G$ is the sfBm $S^H$. Then for all fixed  $H\in (0,1)$
the convergences (\ref{cv p.s.}) and (\ref{cv in law}) hold.
\end{proposition}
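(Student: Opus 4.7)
The plan is to verify the four hypotheses (A1)--(A4) for $G = S^H$ so that Theorems \ref{consistency} and \ref{convergence distribution} apply directly. The preliminary step is routine: from the covariance formula I get $E(S^H_t)^2 = (2 - 2^{2H-1}) t^{2H}$, which gives (A2) with $\gamma = H$, and the standard bound $E(S^H_t - S^H_s)^2 \leq C_H |t-s|^{2H}$ combined with Kolmogorov's continuity criterion yields H\"older continuous paths of order $H - \varepsilon$, which is (A1). The strong consistency (\ref{cv p.s.}) follows immediately from Theorem \ref{consistency}.

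For (A3) I mimic the fBm calculation by decomposing the sfBm covariance as
\begin{eqnarray*}
E(S^H_s S^H_r) = 2g_1(s,r) - g_2(s,r) - g_3(s,r),
\end{eqnarray*}
where $g_1(s,r) = \frac{1}{2}(s^{2H}+r^{2H})$ and $g_3(s,r) = \frac{1}{2}|s-r|^{2H}$ are the two ingredients already handled in Proposition \ref{fBm case}, while $g_2(s,r) = \frac{1}{2}(s+r)^{2H}$ is the only genuinely new piece. With the $\Delta$ notation of Section 4 this yields
\begin{eqnarray*}
E\left[\left(e^{-\theta t}\int_0^t e^{\theta s}dS^H_s\right)^2\right] = 2\Delta_{g_1}(t) - \Delta_{g_2}(t) + \theta I_{2H}(t) - \frac{\theta^2}{2}J_{2H}(t).
\end{eqnarray*}
The fBm proof already provides $\Delta_{g_1}(t) \to 0$, and Lemma \ref{lemma key2 of applications} supplies the limits of $I_{2H}(t)$ and $J_{2H}(t)$; the only new task is to show $\Delta_{g_2}(t) \to 0$. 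This I will do by applying Lemma \ref{lemma key1 of applications} to $g_2$, which produces one term of the same type as in the fBm case plus a double integral involving $H(2H-1)(s+r)^{2H-2}$. For this latter term the monotonicity bound $(s+r)^{2H-2} \leq s^{2H-2}$ (valid for $r \leq s$ since $2H-2 < 0$) reduces matters to a Laplace-type asymptotic $e^{-2\theta t}\int_0^t s^{2H-2} e^{2\theta s}\,ds = O(t^{2H-2}) \to 0$. The resulting limiting variance equals the fBm constant $H\Gamma(2H)/\theta^{2H}$.

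For (A4) I use integration by parts exactly as in the fBm case to write
\begin{eqnarray*}
E\left(S^H_s e^{-\theta t}\int_0^t e^{\theta r}dS^H_r\right) = e^{-\theta(t-s)} f_{S^H}(s,s) - \theta e^{-\theta t}\int_0^s e^{\theta r} f_{S^H}(s,r)dr + e^{-\theta t}\int_s^t e^{\theta r}\frac{\partial f_{S^H}}{\partial r}(s,r)dr,
\end{eqnarray*}
where $\frac{\partial f_{S^H}}{\partial r}(s,r) = H[2r^{2H-1} - (s+r)^{2H-1} - (r-s)^{2H-1}]$ for $r > s$. The first two terms decay exponentially. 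A Taylor expansion in $s/r$ gives $\left|\frac{\partial f_{S^H}}{\partial r}(s,r)\right| \leq C(s)\, r^{2H-2}$ uniformly for $r \geq 2s$, so the tail $\int_{2s}^t$ is handled by the same L'H\^ospital argument as in Proposition \ref{fBm case}, producing $O(t^{2H-2}) \to 0$. The main obstacle I anticipate is the local behaviour of $\frac{\partial f_{S^H}}{\partial r}$ near $r = s$ when $H < \frac{1}{2}$, since $(r-s)^{2H-1}$ is then singular; I will treat this by splitting off $\int_s^{2s}$ and using that $2H-1 > -1$ makes the singularity integrable, giving a bounded quantity that is killed by the $e^{-\theta t}$ prefactor. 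Once (A3) and (A4) are verified, Theorem \ref{convergence distribution} delivers (\ref{cv in law}).
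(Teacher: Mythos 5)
Your overall route is the same as the paper's: verify (A1)--(A4), use Lemmas \ref{lemma key1 of applications} and \ref{lemma key2 of applications} for (A3), and the same integration-by-parts identity for (A4). Your decomposition $E(S^H_sS^H_r)=2g_1-g_2-g_3$ is just the paper's choice $g_{S^H}=2g_1-g_2$ written out, and your treatment of (A4) (splitting $\int_s^{2s}$ from $\int_{2s}^t$ and using a mean-value bound $|\partial_r f_{S^H}(s,r)|\leq C(s)r^{2H-2}$ on the tail) is a valid, slightly more explicit version of the paper's ``same argument as in (\ref{cv1 fBm case})''.

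There is, however, one step that fails for $H<\tfrac12$. In your treatment of $\Delta_{g_2}$ you bound $(s+r)^{2H-2}\leq s^{2H-2}$ and claim
$e^{-2\theta t}\int_0^t s^{2H-2}e^{2\theta s}\,ds=O(t^{2H-2})$. But for $H\leq\tfrac12$ one has $2H-2\leq -1$, so $s^{2H-2}$ is not integrable at $s=0$ and that integral is $+\infty$; the asymptotic is vacuous exactly on half of the parameter range the proposition covers. The quantity you are bounding is in fact finite, because integrating the inner variable first gives $\int_0^s(s+r)^{2H-2}\,dr=\frac{2^{2H-1}-1}{2H-1}\,s^{2H-1}$ with $2H-1>-1$, so the correct reduction is to $e^{-2\theta t}\int_0^t s^{2H-1}e^{2\theta s}\,ds\to 0$ (the same kind of term you already handle). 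The paper sidesteps the issue differently: it first shows the double integral diverges as $t\to\infty$, applies L'H\^ospital's rule, and only then uses the monotonicity bound in the form $(t+r)^{2H-2}\leq t^{2H-2}$, where the singular point is at $t$ large rather than at the origin. Either fix is a one-line repair, but as written your argument does not cover $H\in(0,\tfrac12]$ (the case $H=\tfrac12$ is harmless only because the factor $2H-1$ kills the term). Everything else in the proposal is correct.
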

\begin{proof}
 By  Kolmogorov's continuity criterion
and the fact  $$E\left(S^H_t-S^H_s\right)^2\leq
(2-2^{2H-1})|s-t|^{2H};\ s,\ t\geq~0,$$ we deduce that $S^H$ has
H\"older continuous paths of order
$H-\varepsilon$ for all $\varepsilon\in(0,H)$. So, the process $S^H$ satisfies the assumptions $(\mathcal{H}1)$ and $(\mathcal{H}2)$. Thus, by Theorem \ref{consistency} the convergence (\ref{cv p.s.}) is obtained.\\
To prove (\ref{cv in law}), it suffices to check $(\mathcal{H}3)$
and $(\mathcal{H}4)$.  The case $H=\frac12$  has already been
established above. Suppose now that
$H\in(0,\frac12)\cup(\frac12,1)$. Using the same argument as in
(\ref{Delta+I+J fBm}), we get
\begin{eqnarray}E\left[\left(e^{-\theta t}\int_0^te^{\theta s}dS^H_s\right)^2\right]&=&
\Delta_{g_{S^H}}(t)+\theta I_{2H}(t)-\frac{\theta^2}{2}
J_{2H}(t),\label{Delta+I+J sfBm}
\end{eqnarray}where $g_{S^H}(s,r)= s^{2H}+t^{2H}-\frac12(s+t)^{2H} $.\\
Moreover, we have
\begin{eqnarray*}
\Delta_{g_{S^H}}(t)=2He^{-2\theta t}\int_0^ts^{2H-1}e^{\theta
s}ds-2H(2H-1)e^{-2\theta t}\int_0^tdse^{\theta s}\int_0^sdre^{\theta
r}(s+r)^{2H-2}.
\end{eqnarray*}
It is easy to see that $2He^{-2\theta t}\int_0^ts^{2H-1}e^{\theta s}ds \rightarrow0 \mbox{ as } t\rightarrow\infty$.\\
Furthermore, using the fact that
\begin{eqnarray*}
 \int_0^tdse^{\theta s}\int_0^sdre^{\theta r}(s+r)^{2H-2}
&\geq&(2t)^{2H-2}\int_0^tdse^{\theta s}\int_0^sdre^{\theta r}
\\&=&\frac{(2t)^{2H-2}}{2}\left(\int_0^te^{2\theta s}ds\right)^2
\\&\rightarrow&\infty \mbox{ as } t\rightarrow\infty,
\end{eqnarray*}
 L'H\^ospital's rule entails
\begin{eqnarray*}
\lim_{t\rightarrow\infty}\left(e^{-2\theta t}\int_0^tdse^{\theta
s}\int_0^sdre^{\theta r}(s+r)^{2H-2}\right)
&=&\lim_{t\rightarrow\infty}\left(\frac{1}{2\theta}e^{-\theta t}\int_0^te^{\theta r}(t+r)^{2H-2}dr\right)\\
&\leq&\lim_{t\rightarrow\infty}\left(\frac{t^{2H-2}}{2\theta}e^{-\theta
t}\int_0^te^{\theta r}dr\right)
\\&\rightarrow0& \mbox{ as } t\rightarrow\infty.
\end{eqnarray*}
Thus,  we deduce that
\begin{eqnarray}
\Delta_{g_{S^H}}(t) &\rightarrow&0 \mbox{ as }
t\rightarrow\infty.\label{Delta sfBm}
\end{eqnarray}
Combining (\ref{Delta+I+J sfBm}), (\ref{Delta sfBm}) and
(\ref{key2}) we get
\begin{eqnarray*}E\left[\left(e^{-\theta t}\int_0^te^{\theta s}dS^H_s\right)^2\right]&\longrightarrow&\frac{H\Gamma(2H)}{\theta^{2H}}\quad\mbox{ as $t\rightarrow\infty$}.
\end{eqnarray*}
Hence, to finish the proof it remains to check that, for all fixed
$s\geq0$
\begin{eqnarray*} \label{cv Bs int}\lim_{t\rightarrow\infty}E\left(S^H_se^{-\theta t}\int_0^te^{\theta
r}dS^H_r\right)=0.
\end{eqnarray*}
Let us consider $s<t$ and let $f_{S^H}(s,r)=E(S^H_sS^H_r)$. Then, as
in the fBm case, we can write
\begin{eqnarray*} E\left(S^H_se^{-\theta t}\int_0^te^{\theta
r}dS^H_r\right) &=& e^{-\theta (t-s)}f(s,s)+  e^{-\theta
t}\int_s^te^{\theta r}\frac{\partial f_{S^H}}{\partial r}(s,r)dr-
\theta e^{-\theta t}\int_0^se^{\theta r}f_{S^H}(s,r)dr.
\end{eqnarray*}
It is clear   that $e^{-\theta (t-s)}f_{S^H}(s,s)- \theta e^{-\theta t}\int_0^se^{\theta r}f_{S^H}(s,r)dr\longrightarrow0$ as $t\rightarrow\infty$.\\
On the other hand, since
\begin{eqnarray*}
e^{-\theta t}\int_s^te^{\theta r}\frac{\partial f_{S^H}}{\partial
r}(s,r)dr=\frac{H}{2}e^{-\theta t}\int_s^te^{\theta
r}\left(2r^{2H-1}-(r+s)^{2H-1}-(r-s)^{2H-1}\right)dr,
\end{eqnarray*}
the same argument as in (\ref{cv1 fBm case})  leads to
\begin{eqnarray*}
e^{-\theta t}\int_s^te^{\theta r}\frac{\partial f_{S^H}}{\partial
r}(s,r)dr &\longrightarrow&0\quad \mbox{ as } t \rightarrow\infty,
\end{eqnarray*} which finishes the proof.
\end{proof}
\subsection{Bifractional Browian motion}
Let $B^{H,K} = \left( B^{H,K}_t, t\geq 0 \right)$ be a  bifractional
Brownian motion (bifBm)  with parameters $H\in (0, 1)$ and
$K\in(0,1]$. This means that $B^{H,K}$ is a centered Gaussian
process with the covariance function
\begin{eqnarray*}
E(B^{H,K}_sB^{H,K}_t)=\frac{1}{2^K}\left(\left(t^{2H}+s^{2H}\right)^K-|t-s|^{2HK}\right).
\end{eqnarray*}
  The case $K = 1$ corresponds to the  fBm  with Hurst parameter $H$. The process $B^{H,K}$ verifies \begin{eqnarray*}
 E\left(\left|B^{H,K}_t-B^{H,K}_s\right|^2\right)\leq
2^{1-K}|t-s|^{2HK},
\end{eqnarray*}
so $B^{H,K}$ has $(HK-\varepsilon)-$H\"{o}lder continuous paths for
any $\varepsilon\in (0,HK)$ thanks to Kolmogorov's continuity
criterion. The bifBm $B^{H,K}$ can be extended for $1 < K < 2$ with
$H\in (0, 1)$ and $HK\in (0, 1)$ (see \cite{BE} and \cite{LV}).
\begin{proposition} Suppose that, in (\ref{OU}), the process $G$ is the bifBm $B^{H,K}$. Then
the convergences (\ref{cv p.s.}) and (\ref{cv in law}) hold true for
all fixed  $(H,K)\in (0,1)^2$.
\end{proposition}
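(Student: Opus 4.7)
The plan is to mirror the fBm and sfBm cases: verify hypotheses (A1)--(A4) for $G=B^{H,K}$ and invoke Theorems \ref{consistency} and \ref{convergence distribution}. Kolmogorov's criterion applied to the estimate $E(|B^{H,K}_t-B^{H,K}_s|^2)\leq 2^{1-K}|t-s|^{2HK}$ gives $(HK-\varepsilon)$-H\"older paths, hence (A1); evaluating the covariance at $s=r=t$ yields $E[(B^{H,K}_t)^2]=t^{2HK}$, hence (A2) with $\gamma=HK$. Theorem \ref{consistency} then delivers (\ref{cv p.s.}).

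To establish (A3), I would split $E(B^{H,K}_sB^{H,K}_r)=g_{B^{H,K}}(s,r)-\frac{1}{2^K}|s-r|^{2HK}$ with the symmetric smooth part $g_{B^{H,K}}(s,r):=\frac{1}{2^K}(s^{2H}+r^{2H})^K$. As in (\ref{Delta+I+J fBm}) and (\ref{Delta+I+J sfBm}), the second moment of $e^{-\theta t}\int_0^t e^{\theta s}dB^{H,K}_s$ decomposes as $\Delta_{g_{B^{H,K}}}(t)+\frac{\theta}{2^{K-1}}I_{2HK}(t)-\frac{\theta^2}{2^K}J_{2HK}(t)$, and Lemma \ref{lemma key2 of applications} identifies the limit $\sigma_{B^{H,K}}=\frac{\Gamma(2HK+1)}{2^K\theta^{2HK}}$ once $\Delta_{g_{B^{H,K}}}(t)\to 0$ is known. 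By (\ref{key1}) this reduces to $e^{-2\theta t}\int_0^t e^{\theta s}s^{2HK-1}\,ds\to 0$ (immediate by L'H\^opital) and a double-integral estimate for the mixed partial $\partial^2_{sr}g_{B^{H,K}}(s,r)=\frac{H^2K(K-1)}{2^{K-2}}s^{2H-1}r^{2H-1}(s^{2H}+r^{2H})^{K-2}$; bounding $(s^{2H}+r^{2H})^{K-2}\leq s^{2H(K-2)}$ on $\{r\leq s\}$ (valid since $K<2$) and using the sharp asymptotic $\int_0^s e^{\theta r}r^{2H-1}\,dr\sim \theta^{-1}e^{\theta s}s^{2H-1}$ reduces the double integral to a constant multiple of $e^{-2\theta t}\int_0^t e^{2\theta s}s^{2HK-2}\,ds$, which vanishes by L'H\^opital because $2HK<2$.

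For (A4), fix $s\geq 0$; Young integration by parts via (\ref{Integration by parts}) produces, exactly as in the fBm/sfBm arguments,
\[
E\left(B^{H,K}_s e^{-\theta t}\int_0^t e^{\theta r}dB^{H,K}_r\right)=e^{-\theta(t-s)}f_{B^{H,K}}(s,s)+e^{-\theta t}\int_s^t e^{\theta r}\partial_r f_{B^{H,K}}(s,r)\,dr-\theta e^{-\theta t}\int_0^s e^{\theta r}f_{B^{H,K}}(s,r)\,dr,
\]
with $f_{B^{H,K}}(s,r):=E(B^{H,K}_sB^{H,K}_r)$. The first and third terms vanish trivially. For the middle term I would differentiate to obtain $\partial_r f_{B^{H,K}}(s,r)=\frac{HK}{2^{K-1}}\bigl[r^{2H-1}(s^{2H}+r^{2H})^{K-1}-(r-s)^{2HK-1}\bigr]$ for $r>s$; both bracketed expressions equal $r^{2HK-1}$ to leading order as $r\to\infty$ with an $O(r^{2HK-1-\min(1,2H)})$ remainder (which is strictly negative in the exponent since $HK<1$ in the regime $H\geq 1/2$ and $K<1$ in the regime $H<1/2$), after which L'H\^opital as in (\ref{cv1 fBm case}) delivers the decay.

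The main obstacle is the $\Delta_{g_{B^{H,K}}}(t)\to 0$ step. Unlike for $B^H$ (where $g_{B^H}$ is separable and $\partial^2_{sr}g_{B^H}\equiv 0$), the coupled factor $(s^{2H}+r^{2H})^K$ yields a nonzero mixed partial carrying a $(s^{2H}+r^{2H})^{K-2}$ singularity, and $\partial_s g_{B^{H,K}}(s,0)=\frac{HK}{2^{K-1}}s^{2HK-1}$ is only integrably singular at $s=0$ (when $HK<1/2$) rather than H\"older as Lemma \ref{lemma key1 of applications} requires. The identity (\ref{key1}) must therefore either be rederived directly by Lebesgue integration by parts on $[\epsilon,t]$ with $\epsilon\downarrow 0$, or bypassed by a self-contained estimate of $\Delta_{g_{B^{H,K}}}(t)$, with the exponential weights ensuring integrability of the singular terms throughout.
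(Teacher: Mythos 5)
Your proposal follows essentially the same route as the paper: the same decomposition of the covariance into the smooth symmetric part $g_{B^{H,K}}(s,r)=\frac{1}{2^K}(s^{2H}+r^{2H})^K$ plus the $|s-r|^{2HK}$ term, the same reduction via Lemmas \ref{lemma key1 of applications} and \ref{lemma key2 of applications} for (A3), and the same Young integration by parts with an asymptotic analysis of $\partial_r f_{B^{H,K}}(s,r)$ for (A4) (the paper splits into the cases $HK<\frac12$, $=\frac12$, $>\frac12$ where you give a unified leading-order expansion, but the content is the same). Your two side remarks are in fact points where you are more careful than the paper --- the limiting variance should indeed carry the factor $2^{-K}$, i.e.\ $2^{1-K}\theta\lim I_{2HK}-2^{-K}\theta^2\lim J_{2HK}=2^{-K}\Gamma(2HK+1)\theta^{-2HK}$ rather than the paper's $HK\Gamma(2HK)\theta^{-2HK}$, and the H\"older hypothesis of Lemma \ref{lemma key1 of applications} does fail at $s=0$ when $HK<\frac12$ --- but neither affects the validity of the conclusion, since (A3) only requires existence of a positive limit.
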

\begin{proof}
Since $B^{H,K}$ has H\"older continuous paths of order
$HK-\varepsilon$ for all $\varepsilon\in(0,HK)$, it satisfies the assumptions $(\mathcal{H}1)$ and $(\mathcal{H}2)$.
Thus the convergence (\ref{cv p.s.}) is satisfied.\\
To prove (\ref{cv in law}), it suffices to check $(\mathcal{H}3)$
and $(\mathcal{H}4)$. Using the same argument as in (\ref{Delta+I+J
fBm}), we have
\begin{eqnarray}E\left[\left(e^{-\theta t}\int_0^te^{\theta s}dB^{H,K}_s\right)^2\right]&=&
\Delta_{g_{B^{H,K}}}(t)+2^{1-K}\theta I_{2HK}(t)-2^{-K}\theta^2
J_{2HK}(t),\label{Delta+I+J bifBm}
\end{eqnarray}where $g_{B^{H,K}}(s,r)= \frac{1}{2^K}\left(s^{2H}+r^{2H}\right)^K $.\\
On the other hand,
\begin{eqnarray*}
\Delta_{g_{B^{H,K}}}(t)&=&2^{2-K}HKe^{-2\theta
t}\int_0^ts^{2HK-1}e^{\theta s}ds
\\&&-2^{3-K}H^2K(K-1)e^{-2\theta t}\int_0^tdse^{\theta s}\int_0^sdre^{\theta r}(sr)^{2H-1}
\left(s^{2H}+r^{2H}\right)^{K-2}.
\end{eqnarray*}
The convergence  $2^{2-K}HKe^{-2\theta t}\int_0^ts^{2HK-1}e^{\theta s}ds \rightarrow0 \mbox{ as } t\rightarrow\infty$ is immediate.\\
Also,  it is straightforward to check that there exists a constant
$C_{H,K}$ depending on $H,K$ such that
\begin{eqnarray*}
\int_0^tdse^{\theta s}\int_0^sdre^{\theta
r}(sr)^{2H-1}\left(s^{2H}+r^{2H}\right)^{K-2}
&\geq&C_{H,K}t^{2HK-2}\int_{\frac{t}{2}}^tdse^{\theta
s}\int_{\frac{s}{2}}^sdre^{\theta r}
\\&\geq&\frac{C_{H,K}}{2}t^{2HK-2}\int_{\frac{t}{2}}^t se^{\frac{3\theta}{2} s}ds
\\&\rightarrow\infty& \mbox{ as } t\rightarrow\infty.
\end{eqnarray*}
So, we can apply  L'H\^ospital's rule  to obtain
\begin{eqnarray*}
&&\lim_{t\rightarrow\infty}\left(e^{-2\theta t}\int_0^tdse^{\theta
s}\int_0^sdre^{\theta
r}(sr)^{2H-1}\left(s^{2H}+r^{2H}\right)^{K-2}\right)
\\&=&\lim_{t\rightarrow\infty}\left(\frac{e^{-\theta t}}{2\theta}\int_0^te^{\theta r}(tr)^{2H-1}\left(t^{2H}+r^{2H}\right)^{K-2}dr\right)\\
&\leq&\lim_{t\rightarrow\infty}\left(\frac{2^{K-3}e^{-\theta
t}}{\theta}\int_0^te^{\theta r}(tr)^{HK-1}dr\right)
\\&\leq&\lim_{t\rightarrow\infty}\left(\frac{2^{K-3}t^{2HK-2}}{\theta}e^{-\theta t}\int_0^te^{\theta r}dr\right)
\\&\rightarrow0& \mbox{ as } t\rightarrow\infty.
\end{eqnarray*}
Hence, for every $(H,K)\in (0,1)^2$
\begin{eqnarray}
\Delta_{g_{B^{H,K}}}(t) &\rightarrow&0 \mbox{ as }
t\rightarrow\infty.\label{Delta bifBm}
\end{eqnarray}
Consequently, (\ref{Delta+I+J bifBm}), (\ref{Delta bifBm}) and
(\ref{key2}) imply
\begin{eqnarray*}E\left[\left(e^{-\theta t}\int_0^te^{\theta s}dB^{H,K}_s\right)^2\right]&\longrightarrow&\frac{HK\Gamma(2HK)}{\theta^{2HK}}\quad\mbox{ as $t\rightarrow\infty$}.
\end{eqnarray*}
Hence, to finish the proof it remains to check that, for all fixed
$s\geq0$
\begin{eqnarray*} \label{cv Bs int}\lim_{t\rightarrow\infty}E\left(B^{H,K}_se^{-\theta t}\int_0^te^{\theta
r}dB^{H,K}_r\right)=0.
\end{eqnarray*}
Let us consider $s<t$ and let
$f_{B^{H,K}}(s,r)=E(B^{H,K}_sB^{H,K}_r)$. Then, as in the fBm case,
we can write
\begin{eqnarray*} &&E\left(B^{H,K}_se^{-\theta t}\int_0^te^{\theta
r}dB^{H,K}_r\right) \\&=& e^{-\theta (t-s)}f_{B^{H,K}}(s,s)+
e^{-\theta t}\int_s^te^{\theta r}\frac{\partial
f_{B^{H,K}}}{\partial r}(s,r)dr- \theta e^{-\theta
t}\int_0^se^{\theta r}f_{B^{H,K}}(s,r)dr.
\end{eqnarray*}
We have $e^{-\theta (t-s)}f_{B^{H,K}}(s,s)- \theta e^{-\theta t}\int_0^se^{\theta r}f_{B^{H,K}}(s,r)dr\longrightarrow0$ as $t\rightarrow\infty$.\\
Also,
\begin{eqnarray*}
e^{-\theta t}\int_s^te^{\theta r}\frac{\partial
f_{B^{H,K}}}{\partial r}(s,r)dr=2^{1-K}HKe^{-\theta
t}\int_s^te^{\theta
r}\left(r^{2H-1}\left(s^{2H}+r^{2H}\right)^{K-1}-(r-s)^{2HK-1}\right)dr.
\end{eqnarray*}
Hence, if $HK<\frac12$, L'H\^ospital's rule leads to
\begin{eqnarray*}
\left|e^{-\theta t}\int_s^te^{\theta r}\frac{\partial
f_{B^{H,K}}}{\partial r}(s,r)dr\right|
&\leq&2^{1-K}HKe^{-\theta t}\int_s^te^{\theta r}\left(r^{2HK-1}+(r-s)^{2HK-1}\right)dr\\
&\leq&2^{2-K}HKe^{-\theta t}\int_s^te^{\theta r}(r-s)^{2HK-1}dr\\
&\longrightarrow&\lim_{t\rightarrow\infty}\left(\frac{2^{2-K}HK}{\theta}(t-s)^{2HK-1}\right)=0\quad
\mbox{ as } t \rightarrow\infty.
\end{eqnarray*}
If $HK=\frac12$,
 \begin{eqnarray*}
\left|e^{-\theta t}\int_s^te^{\theta r}\frac{\partial
f_{B^{H,K}}}{\partial r}(s,r)dr\right|
&=&2^{-K}e^{-\theta t} \int_s^te^{\theta r}\left(1-\left(1+\left(\frac{s}{r}\right)^{2H}\right)^{K-1}\right)dr \\
&\longrightarrow&0\quad \mbox{ as } t \rightarrow\infty.
\end{eqnarray*}
 The last convergence comes from  the fact that for $r$ large, $$1-\left(1+\left(\frac{s}{r}\right)^{2H}\right)^{K-1}\leq1-\left(1+\frac{s}{r}\right)^{K-1}\sim (1-K)\frac{s}{r},$$ and L'H\^ospital's rule.
 Similarly, if $HK>\frac12$,
\begin{eqnarray*}
&&\left|e^{-\theta t}\int_s^te^{\theta r}\frac{\partial f_{B^{H,K}}}{\partial r}(s,r)dr\right|\\
&\leq&2^{1-K}HKe^{-\theta t}\int_s^te^{\theta r}r^{2HK-1}\left|\left(1+\left(\frac{s}{r}\right)^{2H}\right)^{K-1}-\left(1-\frac{s}{r}\right)^{2HK-1}\right|dr\\
&\longrightarrow&0\quad \mbox{ as } t \rightarrow\infty,
\end{eqnarray*}
which completes the proof.
\end{proof}

\noindent {\bf Acknowledgement : }The authors would like to thank
the editor and  two anonymous referees   for their careful reading
of the manuscript and for their valuable suggestions and remarks.
The second author  would also like to acknowledge the financial
support of the  University of  Rouen. The third author was supported
by Hassan II Academy of Science and Technology.

 \renewcommand{\theequation}{A-\arabic{equation}}
  \setcounter{equation}{0}
  \section*{Appendix}

In this section we present some calculations used in the paper.\\

For any $\alpha\in (0,1]$, we denote by $\mathscr{H}^\alpha([0,T])$
the set of $\alpha$-H\"older continuous functions, that is, the set
of functions $f:[0,T]\to\R$ such that
\[
|f|_\alpha := \sup_{0\leq s<t\leq
T}\frac{|f(t)-f(s)|}{(t-s)^{\alpha}}<\infty.
\]
We also set $|f|_\infty=\sup_{t\in[0,T]}|f(t)|$, and we equip
$\mathscr{H}^\alpha([0,T])$ with the norm $\|f\|_\alpha :=
|f|_\alpha + |f|_\infty.$\\
 Let $f\in\mathscr{H}^\alpha([0,T])$, and
consider the operator $T_f:\mathcal{C}^1([0,T])
\to\mathcal{C}^0([0,T])$ defined as
\[
T_f(g)(t)=\int_0^t f(u)g'(u)du, \quad t\in[0,T].
\]
It can be shown (see, e.g., \cite[Section 3.1]{nourdin}) that, for
any $\beta\in(1-\alpha,1)$, there exists a constant
$C_{\alpha,\beta,T}>0$ depending only on $\alpha$, $\beta$ and $T$
such that, for any $g\in\mathscr{H}^\beta([0,T])$,
\[
\left\|\int_0^\cdot f(u)g'(u)du\right\|_\beta \leq
C_{\alpha,\beta,T} \|f\|_\alpha \|g\|_\beta.
\]
We deduce that, for any $\alpha\in (0,1)$, any
$f\in\mathscr{H}^\alpha([0,T])$ and any $\beta\in(1-\alpha,1)$, the
linear operator
$T_f:\mathcal{C}^1([0,T])\subset\mathscr{H}^\beta([0,T])\to
\mathscr{H}^\beta([0,T])$, defined as $T_f(g)=\int_0^\cdot
f(u)g'(u)du$, is continuous with respect to the norm
$\|\cdot\|_\beta$. By density, it extends (in an unique way) to an
operator defined on $\mathscr{H}^\beta$. As consequence, if
$f\in\mathscr{H}^\alpha([0,T])$, if $g\in\mathscr{H}^\beta([0,T])$
and if $\alpha+\beta>1$, then the (so-called) Young integral
$\int_0^\cdot f(u)dg(u)$ is well-defined as being $T_f(g)$.

The Young integral obeys the following formula. Let
$f\in\mathscr{H}^\alpha([0,T])$ with $\alpha\in(0,1)$ and
$g\in\mathscr{H}^\beta([0,T])$ for all $\beta\in(0,1)$. Then
$\int_0^. g_udf_u$ and $\int_0^. f_u dg_u$ are well-defined as the Young
integrals. Moreover, for all $t\in[0,T]$,
\begin{eqnarray}\label{Integration by parts}
f_tg_t=f_0g_0+\int_0^t g_udf_u+\int_0^t f_u dg_u.
\end{eqnarray}

\begin{proof}[Proof of Lemma \ref{convergence p.s of xi}] We first notice that the integral $Z_{\infty}=\int_0^{\infty}e^{-\theta s}G_sds$
is well-defined because
$$\int_0^\infty e^{-\theta s}E(|G_s|)ds\leq \sqrt{c}\int_0^\infty s^{\gamma} e^{-\theta s}ds<\infty.$$
Now, we prove (\ref{cv Z}). By using Borel-Cantelli's lemma, it is
sufficient to prove that, for any $\varepsilon > 0$,
\begin{eqnarray*}\label{BC}
\sum_{n\geq0} P\left(\sup_{n\leq t \leq
n+1}\left|\int_{t}^{\infty}e^{-\theta
s}G_sds\right|>\varepsilon\right) <\infty.
\end{eqnarray*}
Notice that for every $\varepsilon > 0$,
\begin{eqnarray*}
E\left(\sup_{n\leq t \leq {n+1}}\left|\int_{t}^{\infty}e^{-\theta
s}G_sds\right|\right)&\leq& E\left(\int_{n}^{\infty}e^{-\theta
s}\left|G_s\right|ds\right)\\
&\leq&\sqrt{c} \int_{n}^{\infty}e^{-\theta
s}s^{\gamma}ds\\
&\leq&\sqrt{c}e^{-\frac{\theta}{2}n}\int_{0}^{\infty}e^{-\frac{\theta}{2}
s}s^{\gamma}ds\\&=&\sqrt{c}\Gamma\left(1+\gamma\right)\left(\frac{2}{\theta}\right)^{1+\gamma}e^{-\frac{\theta}{2}n}.
\end{eqnarray*}
Consequently,
\begin{eqnarray*}
\sum_{n\geq0} P\left(\sup_{n\leq t \leq
{n+1}}\left|\int_{t}^{\infty}e^{-\theta
s}dG_s\right|>\varepsilon\right)
&\leq& \varepsilon^{-1}\sum_{n\geq0}E\left(\sup_{n\leq t \leq {n+1}}\left|\int_{t}^{\infty}e^{-\theta s}dG_s\right|\right)\\
&\leq&
\varepsilon^{-1}\sqrt{c}\Gamma\left(1+\gamma\right)\left(\frac{2}{\theta}\right)^{1+\gamma}\sum_{n\geq0}
e^{-\frac{\theta}{2}n} <\infty,
\end{eqnarray*}
which implies that $Z_t\longrightarrow Z_{\infty}$ almost surely as
$t\rightarrow\infty$. Moreover, since
\begin{eqnarray*}E\left[\left(Z_t-Z_{\infty}\right)^2\right]&=&  \int_t^\infty\int_t^\infty e^{-\theta r}e^{-\theta s}E\left(G_rG_s\right)drds
 \\&\leq& c \int_t^\infty\int_t^\infty e^{-\theta r}e^{-\theta s} \left(rs\right)^\gamma drds
 \\&=&c \left(\int_t^\infty  e^{-\theta s} s^\gamma  ds\right)^2
 \\&\rightarrow&0 \mbox{ as } t\rightarrow\infty,
 \end{eqnarray*} the proof of the claim (\ref{cv Z}) is finished.
The convergence (\ref{cv xi}) is a direct consequence of
(\ref{cv Z}) and (\ref{xi and Z}). Thus the proof of Lemma
\ref{convergence p.s of xi} is done.
\end{proof}

\noindent \begin{proof}[Proof of Lemma \ref{convergence of
dominating}] It follows from  (\ref{cv xi}) that
$\xi_{\infty}\backsim
\mathcal{N}\left(0,E\left[\xi_{\infty}^2\right]\right)$, where
 \begin{eqnarray*}E\left[\xi_{\infty}^2\right]=\theta^2E\left[Z_{\infty}^2\right]&=& \theta^2 \int_0^\infty\int_0^\infty e^{-\theta r}e^{-\theta s}E\left(G_rG_s\right)drds
 \\&\leq& c\theta^2 \int_0^\infty\int_0^\infty e^{-\theta r}e^{-\theta s} \left(rs\right)^\gamma drds
 \\&=&c\left(\frac{\Gamma(\gamma+1)}{\theta^{\gamma}}\right)^2<\infty.
 \end{eqnarray*}This implies that
\begin{eqnarray}\label{xi_infty<fini}P(\xi_{\infty}=0)=0.
\end{eqnarray}
The continuity of $\xi$ entails that, for every $t\geq0$
\begin{eqnarray}\label{minoration xi}\int_0^te^{2\theta
s}\xi_s^2ds\geq
 \int_{\frac{t}{2}}^te^{2\theta s}\xi_s^2ds\geq\frac{t}{2}e^{\theta t}\left(\inf_{\frac{t}{2}\leq s\leq t}\xi_{s}^2\right)\ \mbox{ almost surely}.\end{eqnarray} Furthermore, the continuity of $\xi$ and (\ref{cv xi}) yield  \[\lim_{t\rightarrow\infty}\left(\inf_{\frac{t}{2}\leq s\leq t}\xi_{s}^2\right)=\xi_{\infty}^2\ \mbox{ almost surely}.\]Combining  this last convergence with (\ref{xi_infty<fini}) and (\ref{minoration xi}), we deduce that
  \[\lim_{t\rightarrow\infty}\int_0^te^{2\theta s}\xi_s^2ds=\infty\ \mbox{ almost surely}.\]
Hence, we can use L'H\^ospital's rule to obtain
\begin{eqnarray*}\lim_{t\rightarrow\infty}\frac{\int_0^te^{2\theta s}\xi_s^2ds}{e^{2\theta t}} = \lim_{t\rightarrow\infty}\frac{\xi_t^2}{2\theta}
 = \frac{\xi_{\infty}^2}{2\theta}\quad \mbox{ almost surely,}
\end{eqnarray*}which completes the proof of Lemma \ref{convergence of dominating}.\end{proof}

\begin{proof}[Proof of Lemma \ref{decomposition seconf moment of X}]
Let $t\geq0$. Setting $\eta_t=\int_0^t X_s ds$,   the equation
(\ref{OU}) leads to
\begin{eqnarray*}\frac12 X_t^2=\frac12 \theta^2\eta_t^2+\frac12G_t^2+\theta \eta_tG_t.\end{eqnarray*}
Moreover, (\ref{Integration by parts}) and (\ref{OU}) entail
\begin{eqnarray*}\frac12 \eta_t^2=\int_0^t \eta_sd\eta_s=\int_0^t \eta_s
X_sds=\theta^{-1}\left(\int_0^t X_s^2ds-\int_0^t G_s X_s
ds\right).
\end{eqnarray*}Define   $Y_t:=\int_0^t e^{\theta s}G_sds$. Then,  by (\ref{representation of X}) and (\ref{Integration by parts})
\begin{eqnarray*}\int_0^t G_s X_s ds&=&\int_0^t  G_s\left( G_s+\theta e^{\theta s}Z_s\right) ds
\\&=& \int_0^t G_s^2ds+\theta \int_0^t e^{\theta s} G_sZ_sds
\\&=& \int_0^t G_s^2ds+\theta \int_0^t    Z_sdY_s
\\&=& \int_0^t G_s^2ds+\theta Z_tY_t-\theta \int_0^t   Y_sdZ_s
\\&=& \int_0^t G_s^2ds+\theta Z_tY_t-\theta \int_0^t ds\int_0^s dr G_sG_re^{-\theta(s-r)}.
\end{eqnarray*}
Thus, we deduce that
\begin{eqnarray}\label{equ on R}\frac12 X_t^2=\theta \int_0^t  X_s^2 ds-\theta^2Z_tY_t +\theta
\eta_tG_t+ R_t.
\end{eqnarray}
On the other hand, by (\ref{OU}) and (\ref{representation of X}) we
get
\begin{eqnarray*}\theta\eta_tG_t=  G_t\left(X_t-G_t\right)
=-\theta e^{\theta t} G_tZ_t.
\end{eqnarray*}
This implies that
\begin{eqnarray*}-\theta^2Z_tY_t +\theta
\eta_tG_t=-\theta Z_t(\theta Y_t-e^{\theta t}G_t)&=&   \theta
Z_t\int_0^te^{\theta s}dG_s.
\end{eqnarray*}
Combining this with (\ref{equ on R})  the proof of Lemma
\ref{decomposition seconf moment of X} is done.
\end{proof}

\begin{proof}[Proof of Lemma \ref{lemma in law}]For any $d\geq1$, $s_1\ldots
s_d\in[0,\infty)$, we shall prove that, as $t\rightarrow\infty$,
\begin{eqnarray}\label{law technic}\left(B_{s_1},\ldots,B_{s_d},e^{-\theta t}\int_0^te^{\theta s}dG_s\right)\overset{\texttt{law}}{\longrightarrow}
\left(B_{s_1},\ldots,B_{s_d},\sigma_GN\right),
\end{eqnarray} which is enough to lead to the desired conclusion. Because the left-hand side in the previous
convergence is a Gaussian vector (see proof of Lemma 7 in
\cite{bridge}), to get (\ref{law technic}) it is sufficient to check
the convergence of its covariance matrix. Thus, the assumptions
$(\mathcal{H}3)$ and $(\mathcal{H}4)$ complete the proof.
\end{proof}

\bibliographystyle{amsplain}
\addcontentsline{toc}{chapter}{Bibliographie}

\end{document}